\let\mathbb\mathds
\let\mathbb\mathds
\DeclareMathAlphabet\oldmathcal{OMS}        {cmsy}{b}{n}
\SetMathAlphabet    \oldmathcal{normal}{OMS}{cmsy}{m}{n}
\DeclareMathAlphabet\oldmathbcal{OMS}       {cmsy}{b}{n}
\newtheorem{theorem}{Theorem}[section]
\newtheorem{lemma}[theorem]{Lemma}
\newtheorem{corollary}[theorem]{Corollary}
\newtheorem{convention}[theorem]{Convention}
\newtheorem{proposition}[theorem]{Proposition}
\theoremstyle{remark}
\newtheorem{rem}[theorem]{Remark}
\theoremstyle{definition}
\DeclareMathOperator{\im}{Im}
\newcommand{\intprod}{\mathbin{\raisebox{\depth}{\scalebox{1}[-1]{$\lnot$}}}}
\newcommand{\nts}[1]{\marginpar{#1}}
\renewcommand{\nts}[1]{}
\newcommand{\R}{\mathbf{R}}
  \def\bfV{\mbox{{\bf V}}} \def\bfS{\mbox{{\bf S}}}    \def\bfH{\mbox{{\bf H}}}      \def\bfF{\mbox{{\bf F}}}
\def\bfZ{\mbox{{\bf Z}}}
\def\mF{\mathcal{F}}   \def\mA{\mathcal{A}}     \def\mL{\mathcal{L}}    \def\mH{\mathcal{H}}           
\def\cala{\mathcal{A}} \def\cals{\mathcal{S}} 
   \def\bC{\mathbb C} \def\bR{\mathbb R} \def\bQ{\mathbb Q}      
\def\R{\mathbb R}\def\C{\mathbb C}
\def\fract#1#2{\raise4pt\hbox{$ #1 \atop #2 $}}
\def\bbc{{\mathbb C}}
\def\bbp{{\mathbb P}}
\def\bbq{{\mathbb Q}}
\def\bbr{{\mathbb R}}
\def\bbz{{\mathbb Z}}
\def\gre{\epsilon}
\def\gro{\omega}
\def\grr{\rho}
\def\grD{\Delta}
\def\grS{\Sigma}
\def\bfw{{\bf w}}
\def\delr{ \frac{\del}{\del r}}
\def\kg{{\mathfrak{g}}}
\def\del{\partial}              
\def\kt{\mathfrak{t}}
\def\vol{d v }
\def\ra{\rightarrow}
\def\Sas{\mbox{Sas}}
\def\Ds{\mathcal{D}}
\def\cala{{\mathcal A}}
\def\cald{{\mathcal D}}
\def\calf{{\mathcal F}}
\def\call{{\mathcal L}}
\def\cals{{\oldmathcal S}}
\def\gre{\epsilon}
\def\gro{\omega}
\def\grr{\rho}
\def\grD{\Delta}
\def\grS{\Sigma}
\def\gr{{\mathfrak r}}
\def\gt{{\mathfrak t}}
\begin{document}

\title[The DH theorem and applications]{An application of the Duistertmaat--Heckman Theorem and its extensions in Sasaki Geometry}
\date{\today}
\author[C. Boyer]{Charles P. Boyer}
\author[H. Huang]{Hongnian Huang}
\author[E. Legendre]{Eveline Legendre}

%
%
 \address{Charles Boyer, Department of Mathematics and Statistics,
 University of New Mexico, Albuquerque, NM 87131.}
 \email{cboyer@math.unm.edu} 
 \address{Hongnian Huang, Department of Mathematics and Statistics,
 University of New Mexico, Albuquerque, NM 87131.}
 \email{hnhuang@unm.edu}
 \address{Eveline Legendre\\ Universit\'e Paul Sabatier\\
 Institut de Math\'ematiques de Toulouse\\ 118 route de Narbonne\\
 31062 Toulouse\\ France}
\email{eveline.legendre@math.univ-toulouse.fr}

 \thanks{The first author was partially supported by grant \#245002 from the Simons Foundation. The third author is partially supported by France ANR project EMARKS No ANR-14-CE25-0010.} 

\begin{abstract} Building on an idea laid out by Martelli--Sparks--Yau in~\cite{MSYvolume}, we use the Duistermaat-Heckman localization formula and an extension of it to give rational and explicit expressions of the volume, the total transversal scalar curvature and the Einstein--Hilbert functional, seen as functionals on the Sasaki cone (Reeb cone). Studying the leading terms we prove they are all proper. Among consequences we get that the Einstein-Hilbert functional attains its minimal value and each Sasaki cone possess at least one Reeb vector field with vanishing transverse Futaki invariant.        
\end{abstract}

\maketitle

\section{Introduction}
 
The general problem motivating our work is: given a polarized Sasaki type manifold $(N^{2n+1},\xi)$, does there exist a compatible constant scalar curvature Sasaki (cscS for short) metric? This is a hard problem and the answer is conjecturally related to some notion of K-stability see~\cite{CollinsSz} and is closely related to the analogous problem in (compact) K\"ahler geometry, see for eg.~\cite{don:scalar,don:csc,RT,stoppa}. Well-known obstructions are the K-stability see~\cite{CollinsSz}, the transversal Futaki invariant~\cite{FutakiOnoWang} and the Einstein--Hilbert functional~\cite{nonUNIQcscS,BHLT_EH}. In this paper we study the latter using the Duistermaat-Heckman localization formula developping on an idea of~\cite{MSYvolume}. More precisely, given a Sasaki manifold $(N,\Ds,J, \xi_o)$ with maximal torus of automorphisms $T\subset \mbox{CR}$, the (reduced) Sasaki cone (or Reeb cone), denoted $\kt^+$, is an open polyhedral cone in $\kt =\mbox{Lie}(T)$ and contains all the $T$--invariant Reeb vector 
fields on $(N,\Ds,J)$. Picking a quasi-regular 
Reeb vector field $\xi_o\in\kt^+$, 
that is, it induces a circle action $S_o^1 \subset T$, we build an orbibundle $\pi: \mL\ra W$ polarizing the quotient symplectic orbifold $(W:=N/S_o^1, \sigma)$ so that there is a biholomorphism between $\mL \backslash W $ and the K\"ahler cone $Y_0:=N\times \bR$ over $N$. The action of $T$ is defined on $L$ and the fixed points set $\bfZ=\sqcup Z$ consists in a disjoint union of symplectic suborbifolds of $(W,\sigma)$. Moreover, the normal bundle of $Z$ in $M$ is $T$--invariant and splits with respect to the action into (real) rank $2$ symplectic bundles $\oplus^{n-n_Z}_{i=0} E_i^Z$ on which $T$ acts with weight $\underline{\kappa}_i^Z \in\kt^*$ (where $\dim Z=2n_Z$). We recall and explain these facts in~\S\ref{SECTorbiResol} and we then we use it to prove that           

\begin{equation}\label{eq:VOL}
\bfV_{\xi} = \mbox{vol}(N,\xi)= \frac{(2\pi)^{n+1}}{n!} \sum_Z \frac{1}{d_Z}\int_Z \frac{1}{\prod_{j=0}^{n-n_Z} \left(c_1(E^Z_j) -\langle \underline{\kappa}_j^Z, \xi\rangle\right)}.
\end{equation}

In~\cite{MSYvolume}, Martelli, Sparks and Yau proved that the volume functional $\bfV$ is a rational function of $\xi\in \kt^+$ when the K\"ahler cone is Gorenstein. This hypothesis is not needed to get \eqref{eq:VOL} and moreover by studying the leading term in \eqref{eq:VOL} we get the following corollary   

 \begin{theorem}\label{theoVol=rat}
  Let $(N, \Ds)$ be a compact contact manifold of Sasaki type of dimension $2n+1$ and $\kt^+$ a compatible Sasaki cone. The volume functional $\bfV: \kt^+ \ra \bR$ is a rational function homogenous of order $-(n+1)$. Moreover, $\bfV$ tends to $+\infty$ when $\xi$ approaches the boundary of $\kt^+$.   
 \end{theorem}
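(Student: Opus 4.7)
The plan is to read rationality, homogeneity, and properness off directly from~\eqref{eq:VOL}. For the first two, the key step is the terminating geometric series expansion (valid since $c_1(E_j^Z)$ is nilpotent in $H^*(Z;\bR)$)
$$\frac{1}{c_1(E_j^Z)-\langle\underline{\kappa}_j^Z,\xi\rangle}=-\frac{1}{\langle\underline{\kappa}_j^Z,\xi\rangle}\sum_{k\ge 0}\left(\frac{c_1(E_j^Z)}{\langle\underline{\kappa}_j^Z,\xi\rangle}\right)^k.$$
Taking the product over $j$ and observing that integration over the $2n_Z$-dimensional $Z$ selects only the monomials of total Chern-class degree $n_Z$, each surviving term becomes a rational function in $\xi$ whose denominator is a product of $(n-n_Z+1)+n_Z=n+1$ linear forms $\langle\underline{\kappa}_j^Z,\xi\rangle$; rationality and homogeneity of degree $-(n+1)$ then follow immediately.

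For divergence at the boundary, I would use that $\kt^+$ is an open convex polyhedral cone whose facets lie on hyperplanes $H_*=\{\langle\underline{\kappa}_*,\cdot\rangle=0\}$, with $\underline{\kappa}_*$ one of the normal weights appearing in~\eqref{eq:VOL}, and that $\langle\underline{\kappa}_*,\xi\rangle>0$ on $\kt^+$. The radial direction $\xi\to 0$ is automatic from the $-(n+1)$-homogeneity. For a non-radial boundary point $\xi_0\in\partial\kt^+\cap H_*$, setting $f(\xi):=\langle\underline{\kappa}_*,\xi\rangle$, I would regroup the expanded terms of~\eqref{eq:VOL} by their order of pole in $f$ to write, near $\xi_0$,
$$\bfV_\xi=\frac{A_m(\xi)}{f(\xi)^{m}}+R(\xi),$$
with $A_m$ regular at $\xi_0$, $m\ge 0$ the pole order, and $R$ containing the lower-order singular and regular parts. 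Since $\bfV_\xi>0$ and $f(\xi)>0$ on $\kt^+$, multiplying by $f^m$ and letting $\xi\to\xi_0$ yields $A_m(\xi_0)\ge 0$ by continuity.

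The main obstacle is then to verify that $m\ge 1$ and that $A_m(\xi_0)>0$, i.e.\ that the singular contributions from the different summands of~\eqref{eq:VOL} do not conspire to cancel. The plan here is to isolate those summands whose weight list contains $\underline{\kappa}_*$---equivalently, the fixed components $Z$ lying on the face $H_*$---and, after clearing the top power of $f$, to recognize the residue $A_m(\xi_0)$ via a further Atiyah--Bott--Berline--Vergne localization (restricted to the subtorus whose Lie algebra is $\ker\underline{\kappa}_*$) as a positive volume-type integral over a lower-dimensional contact or K\"ahler structure supported on that face. An alternative, more geometric route---closer in spirit to Martelli--Sparks--Yau---is to identify $\bfV_\xi$, up to universal constants, with the Euclidean volume of the slice of the moment cone by the hyperplane $\{\langle\cdot,\xi\rangle=\tfrac12\}$ and to observe that this slice becomes unbounded precisely as $\xi\to H_*$. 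Either argument produces $\bfV_\xi\to+\infty$ at $\partial\kt^+$.
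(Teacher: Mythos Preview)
Your treatment of rationality and homogeneity is exactly the paper's: expand each factor as a terminating geometric series in $c_1(E_j^Z)/\langle\underline{\kappa}_j^Z,\xi\rangle$, note that integration over $Z$ picks out total degree $m_Z$ in the Chern classes, and count the resulting $(n-m_Z+1)+m_Z=n+1$ linear factors in the denominator.

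For the boundary behaviour, however, you stop precisely at the real difficulty (``the main obstacle is then to verify that $m\ge 1$ and $A_m(\xi_0)>0$'') and only sketch two routes. Neither is carried out, and both need more than you indicate. Your claim that the facets of $\kt^+$ are cut out by hyperplanes $\{\langle\underline{\kappa}_*,\cdot\rangle=0\}$ for \emph{normal} weights is not quite right as stated: what vanishes as $\xi\to\partial\kt^+$ is $\eta_o(X_b)$ at some point of $N$, which in the notation of~\eqref{eq:VOL} is the \emph{fiber} weight $\langle\underline{\kappa}_0^Z,\xi\rangle$ of the line bundle $\mL$, not one of the tangential weights $\underline{\kappa}_i^Z$ ($i\ge1$). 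Without this identification you cannot even locate which summand of~\eqref{eq:VOL} carries the pole. Your option~(b), the moment-polytope slice, is only available in the toric case.

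The paper's argument (Corollary~\ref{coroVboundary}) is more direct and avoids any secondary localization. Since $\eta_o(X_b)=\langle\mu_{\eta_o},b\rangle$ is the pullback of an affine function on the moment polytope, it is Morse--Bott of even index, hence has a \emph{unique} connected minimum component $\tilde Z\subset N$ projecting to some $Z\subset W$; this is the only summand contributing the top-order pole. At a minimum the tangential weights satisfy $\langle\underline{\kappa}_i^Z,b\rangle>0$ for $i=1,\dots,n-m_Z$, while $\langle\underline{\kappa}_0^Z,b\rangle=\eta_o(X_b)_p\to 0$. The leading term is then
\[
\frac{(2\pi)^{n+1}}{n!\,d_Z}\left(\prod_{j=0}^{n-m_Z}\frac{1}{\langle\underline{\kappa}_j^Z,b\rangle}\right)\int_Z\left(\frac{c_1(E_0^Z)}{\langle\underline{\kappa}_0^Z,b\rangle}\right)^{m_Z},
\]
and its positivity follows from the geometric identification $c_1(E_0^Z)=\iota_Z^*c_1(\mL)=\iota_Z^*[\sigma/2\pi]>0$ (the restriction of the K\"ahler class of $W$). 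The Morse--Bott uniqueness of the minimum is what rules out the cancellation you were worried about; no residue computation or further localization is needed.
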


 We also derive a similar formula for the {\it total transversal scalar curvature} $\bfS_\xi$ using the same orbibundle $\call\ra W$ and an extension, see Theorem~\ref{extDHteo}, of the Duistermaat--Heckman localization formula. The formula is  

 \begin{equation}\label{eqSCAL}
\bfS_{\xi} =\frac{2(2\pi)^{n+1}}{(n-1)!} \sum_Z \frac{1}{d_Z}\int_Z \frac{(\iota_Z^*c_1(W) + \sum_{i=1}^{n-n_Z} \langle \underline{\kappa}_i^Z, \xi\rangle  )}{\prod_{j=0}^{n-n_Z} ( c_1(E^Z_j)   -\langle \underline{\kappa}_j^Z, \xi\rangle)} 
\end{equation}

As a direct consequence we get the following Theorem. 

 \begin{theorem}\label{theoSCAL=rat}
  Let $(N, \Ds)$ be a compact contact manifold of Sasaki type of dimension $2n+1$ with a fixed class of compatible CR structures $[J]$. Let $\kt^+$ be a compatible Sasaki cone. The total transversal scalar curvature functional $\bfS: \kt^+ \ra \bR$ is a rational function homogenous of order $-n$. 
 \end{theorem}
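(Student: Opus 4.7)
The plan is to mirror the strategy for Theorem~\ref{theoVol=rat}: start from formula \eqref{eqSCAL}, expand each factor in the denominator as a nilpotent geometric series, reduce each $Z$-integrand to a finite sum of products of Chern classes with rational coefficients in $\xi$, and then read off the homogeneity by a simple degree count. The only new feature compared with the volume case is the linear-in-$\xi$ summand inside the numerator, which shifts part of the count by one but does not alter the end result.

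Concretely, fix a fixed-point component $Z \subset W$ of complex dimension $n_Z$ and set $t_j(\xi) := \langle \underline{\kappa}^Z_j, \xi\rangle$. Since the $\underline{\kappa}^Z_j$ are the weights of the transverse $T$-action on the normal bundle of $Z$, and the Reeb-cone condition $\xi \in \kt^+$ prevents any of them from vanishing on $\xi$, each $t_j(\xi)$ is a non-vanishing linear form on $\kt^+$. I would then expand
\[
\frac{1}{c_1(E^Z_j) - t_j(\xi)} = -\sum_{k \geq 0} \frac{c_1(E^Z_j)^{k}}{t_j(\xi)^{k+1}},
\]
which is a finite sum on $Z$ since $c_1(E^Z_j)$ is nilpotent in $H^*(Z)$. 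Multiplying these expansions against the numerator $\iota_Z^* c_1(W) + \sum_i t_i(\xi)$ and integrating over $Z$ keeps only the monomials of total Chern degree $n_Z$, yielding a finite sum of rational functions of $\xi$. This already proves rationality of each $Z$-contribution and hence of $\bfS_\xi$.

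For the homogeneity, I would split the integrand according to which piece of the numerator is used. The piece $\iota_Z^* c_1(W)$ has Chern degree $1$ and $\xi$-degree $0$, so integration selects multi-indices with $\sum_j k_j = n_Z - 1$, and the accompanying factor $\prod_j t_j(\xi)^{-(k_j+1)}$ is homogeneous of degree $-(n_Z-1)-(n-n_Z+1) = -n$ in $\xi$. The piece $\sum_i t_i(\xi)$ has Chern degree $0$ and $\xi$-degree $1$, so integration selects $\sum_j k_j = n_Z$ and the $\xi$-factor has degree $1 - n_Z - (n - n_Z + 1) = -n$. Both cases, and therefore $\bfS_\xi$, are rational of degree $-n$.

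The step I expect to need the most care is not the degree bookkeeping itself but the verification that \eqref{eqSCAL} is legitimately available for every $\xi \in \kt^+$, not only for the quasi-regular rays where the orbibundle $\call \to W$ from \S\ref{SECTorbiResol} is directly constructed and Theorem~\ref{extDHteo} applies verbatim. I would pass to arbitrary $\xi$ by continuity of both sides in $\xi$ together with density of the quasi-regular locus in $\kt^+$, observing that the right-hand side of \eqref{eqSCAL} is continuous throughout $\kt^+$ because the linear forms $t_j(\xi)$ are precisely the normal-bundle weights at $Z$ and hence do not vanish on $\kt^+$.
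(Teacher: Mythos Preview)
Your expansion and degree count from \eqref{eqSCAL} are exactly how the paper proceeds, and the homogeneity bookkeeping is correct. Two points deserve tightening, however.

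First, it is \emph{not} true that $\xi\in\kt^+$ forces all $t_j(\xi)=\langle\underline{\kappa}^Z_j,\xi\rangle\neq 0$. This holds for $j=0$, since $\langle\underline{\kappa}^Z_0,\xi\rangle=\eta_o(\xi)>0$ by \S\ref{secWeights}, but for $j\geq 1$ the weights lie in the annihilator of $b_o$ and hence vanish at $\xi=\xi_o\in\kt^+$. The correct statement is that the $t_j$ are non-zero on a dense Zariski-open subset of $\kt^+$; your geometric-series expansion is valid there, and the conclusion then extends to all of $\kt^+$ because $\bfS_\xi$ is continuous and the right-hand side of \eqref{eqSCAL} is a single rational function of $\xi$.

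Second, your closing paragraph slightly misreads the construction. The orbibundle $\call\to W$ of \S\ref{SECTorbiResol} is built from a \emph{single} auxiliary quasi-regular $\xi_o$, and the localization formula (Theorem~\ref{extDHteo} together with Remark~\ref{remMSYtrick}) then produces \eqref{eqSCAL} for \emph{every} $\xi\in\kt^+$ at once, not merely for quasi-regular $\xi$. So no density-of-quasi-regular-rays argument is needed at that stage; the only continuity argument required is the one in the previous paragraph, removing the spurious hyperplanes $\{t_j=0\}$.
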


 \begin{rem}
  Tian used the Duistermaat-Heckman localization formula to compute the Futaki invariant of some K\"ahler manifolds in~\cite{Tian}. Using the relation found in \cite{nonUNIQcscS,BHLT_EH}, Tian's formula should be related to the derivative of~\eqref{eqSCAL} in the regular case.
 \end{rem}  
 
  \begin{rem}In the toric case the fixed points set consists in a set of isolated points. The formula \eqref{eqSCAL} becomes  
  \begin{equation}\label{eqSCALtoric}
\bfS_{\xi} =\frac{2(2\pi)^{n+1}}{(n-1)!} \sum_p \frac{1}{d_Z} \frac{\sum_{i=1}^{n} \langle \underline{\kappa}_i^p, \xi\rangle }{\prod_{j=0}^{n}\langle \underline{\kappa}_j^p, \xi\rangle)} 
\end{equation}  which coincides with the one found in \cite{nonUNIQcscS} using integration by parts.\end{rem}

 The claims of Theorems \ref{theoVol=rat} and \ref{theoSCAL=rat} may also be obtained by studying the Hilbert series of the polarized K\"ahler cone associated to $(N,\Ds,J,\xi)$, see~\cite{CollinsSz}, as pointed out to us by Tristan Collins.
 
 Now by analysing the weights of the action of $T$ on $\mL$, using the construction and Morse's Lemma we get that the leading term, when $\xi\in \kt^+$ tends to the boundary of $\kt^+$, of the rhs of~\eqref{eqSCAL} is positive. From which we derive the following result. 
 
  \begin{corollary}\label{coroSboundary} When $\xi\in \kt^+$ tends to the boundary of $\kt^+$, $\bfS_{\xi}$ tends to $+\infty$.  
 \end{corollary}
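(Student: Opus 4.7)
The plan is to extract the leading singular behavior of the formula \eqref{eqSCAL} on approach to any boundary point $\xi^*\in\partial\kt^+$, and to verify that its coefficient is strictly positive. Since by Theorem~\ref{theoSCAL=rat} the functional $\bfS$ is rational, homogeneous of order $-n$ and smooth on the open cone $\kt^+$, divergence can only occur at $\partial\kt^+$. It suffices to work along a radial approach $\xi(t)=\xi^*+t\eta\in\kt^+$ with $t\to 0^+$ and to identify the leading pole together with the sign of its coefficient.

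First I would use the construction of the orbibundle $\pi:\mL\to W$ recalled in \S\ref{SECTorbiResol} to describe $\partial\kt^+$: the Sasaki cone is cut out inside $\kt$ by hyperplanes of the form $\{\langle\underline{\kappa}_j^Z,\cdot\rangle=0\}$ coming from the $T$--weights on the normal bundles of the fixed loci $Z\subset W$. Consequently, as $\xi(t)\to\xi^*$, a non-empty subset $\mV(\xi^*)\subset\{(Z,j)\}$ of these weights vanishes at rate $O(t)$ while the remaining weights stay bounded below by a positive constant.

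Next, I would compute the leading term of \eqref{eqSCAL} by expanding each denominator factor geometrically,
$$\frac{1}{c_1(E_j^Z)-\langle\underline{\kappa}_j^Z,\xi\rangle}=-\frac{1}{\langle\underline{\kappa}_j^Z,\xi\rangle}\sum_{k\geq 0}\left(\frac{c_1(E_j^Z)}{\langle\underline{\kappa}_j^Z,\xi\rangle}\right)^k,$$
and retaining only the degree-$2n_Z$ component in order to integrate over $Z$. The most singular contributions in $t$ come from the fixed components $Z$ for which $\mV(\xi^*)$ captures the largest number of denominator factors, and, within each such $Z$, from the lowest Chern-class power in the expansion. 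A parallel expansion of the numerator $\iota_Z^*c_1(W)+\sum_i\langle\underline{\kappa}_i^Z,\xi\rangle$ shows that the topological term $\iota_Z^*c_1(W)$ is bounded at $\xi^*$, while the non-vanishing weights combine into a strictly positive real coefficient, so the numerator does not cancel the pole.

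The main obstacle is establishing positivity of the leading coefficient, since different $Z$ might a priori contribute with opposite signs. Here I would invoke Morse's Lemma for the transverse moment map of the $T$--action on $W$: the weights $\underline{\kappa}_j^Z$ are exactly the eigenvalues of the normal Hessian at the critical locus $Z$, so their signs are determined by the symplectic orientation of the splitting $\oplus E_j^Z$ and are compatible across all components. Combining this with a direct comparison to the volume formula \eqref{eq:VOL} --- which shares the same denominator structure as \eqref{eqSCAL} and whose leading term is positive by Theorem~\ref{theoVol=rat} --- lets one transfer positivity to the scalar curvature expression via the dominant numerator factor $\sum_i\langle\underline{\kappa}_i^Z,\xi\rangle$, yielding $\bfS_\xi\to+\infty$ along every approach to $\partial\kt^+$.
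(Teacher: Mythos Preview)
Your overall strategy---expand the denominators in \eqref{eqSCAL} geometrically, isolate the leading pole as $\xi\to\partial\kt^+$, and argue positivity of its coefficient---is exactly the paper's approach. But the execution has two genuine gaps that the paper handles carefully.

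First, your description of $\partial\kt^+$ is off. The Sasaki cone is \emph{not} cut out by all the hyperplanes $\{\langle\underline{\kappa}_j^Z,\cdot\rangle=0\}$. In the paper's setup (see \S\ref{secWeights}) the weight $\underline{\kappa}_0^Z$ along the fiber $\mL_p$ satisfies $\langle\underline{\kappa}_0^Z,b\rangle=\eta_o(X_b)_p$, and it is precisely this weight that governs the Sasaki cone: $\kt^+=\{b:\eta_o(X_b)>0\text{ on }N\}$. The remaining weights $\underline{\kappa}_1^Z,\ldots,\underline{\kappa}_{n-m_Z}^Z$ lie in the annihilator of $b_o$ and do not define the boundary. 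So at $\partial\kt^+$ it is $\langle\underline{\kappa}_0^Z,b\rangle$ that vanishes, not an arbitrary subset of weights. More importantly, the paper invokes the Morse--Bott argument (connected level sets of moment maps) to show that there is a \emph{unique} fixed component $Z$---the global minimum of $\eta_o(X_b)$---where this happens, so only one term in the sum dominates and no sign cancellation can occur. Your phrase ``compatible across all components'' does not supply this.

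Second, the positivity of the leading coefficient is not obtained by ``transferring'' from the volume formula; the numerators differ and such a comparison is not made rigorous in your sketch. The paper proceeds directly: at the minimum component $Z$ one has $\langle\underline{\kappa}_i^Z,b\rangle>0$ for $i=1,\ldots,n-m_Z$ (since $Z$ is a minimum of the moment map), and Lemma~\ref{lemFIXlap} computes $-f_{o,\xi}(Z)=\sum_{i=1}^{n-m_Z}\langle\underline{\kappa}_i^Z,b\rangle>0$, which is exactly the numerator factor in the dominant term. Together with $c_1(E_0^Z)=\iota_Z^*[\sigma/2\pi]>0$ (needed for the factor $\int_Z c_1(E_0^Z)^{m_Z}$ in the leading term~\eqref{leadSxi}), this yields the positive divergence. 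You should identify the role of $j=0$ explicitly, invoke the uniqueness of the minimum, and compute the numerator via Lemma~\ref{lemFIXlap} rather than appeal to the volume.
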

 
 The Einstein--Hilbert functional is defined here to be the following homogenous functional 
\begin{equation}\label{HEdef} 
 \bfH(\xi) = \frac{\bfS^{n+1}_{\xi}}{\bfV_\xi^n}
\end{equation} 
on the (reduced) Sasaki cone $\kt^+$. 

In~\cite{nonUNIQcscS,BHLT_EH} it is shown that this functional detects the Reeb vector field whose transversal Futaki invariant vanishes. That is fixing the isotopy class of $(\Ds,J)$ we have 
$$\{\xi \in \kt \,|\, \exists \mbox{ compatible cscS } \}\subset \mbox{crit}(\bfH).$$
This is our main motivation to study this functional and in particular, its critical points. Non-uniqueness is now known, indeed $\bfH$ is not convex and may possess many critical points~\cite{nonUNIQcscS,BoToJGA,BHLT_EH}. In this paper we solve the question of existence by proving that $\bfH$ is proper on the convex (relatively compact) set of rays of the Sasaki cone.

\begin{corollary}\label{coroH} 
The Einstein--Hilbert functional is an homogenous rational function on $\kt^+$, tending to $+\infty$ when $\xi\in \kt^+$ tends to the boundary of $\kt^+$. In particular, it attains its minimal value along a ray in the interior of the cone $\kt^+$.\end{corollary}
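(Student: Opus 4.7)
I would treat the three assertions of the corollary in sequence. The first is immediate from Theorems~\ref{theoVol=rat} and~\ref{theoSCAL=rat}: since $\bfV$ is rational and homogeneous of degree $-(n+1)$ and $\bfS$ is rational and homogeneous of degree $-n$, the quotient $\bfH=\bfS^{n+1}/\bfV^n$ is rational of degree $(n+1)(-n)-n(-(n+1))=0$. Consequently $\bfH$ descends to a continuous function on the projectivization $P\kt^+:=\kt^+/\bR_{>0}$, a relatively compact convex open subset of an affine hyperplane in $\kt$. Granting the second assertion, namely $\bfH\to+\infty$ at $\partial\kt^+$, the third follows by a standard properness argument: every sublevel set $\{\bfH\le c\}$ is then relatively compact in $P\kt^+$, so $\bfH$ is bounded below and attains its infimum at some ray in the interior of $\kt^+$.

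The substance of the proof is the divergence of $\bfH$ at the boundary. To isolate it from the scaling freedom, I would work on the global section $\Sigma:=\{\xi\in\kt^+:\bfV(\xi)=1\}$ of the ray projection; this section exists because $t\mapsto\bfV(t\eta)=t^{-(n+1)}\bfV(\eta)$ is a bijection $\bR_{>0}\to\bR_{>0}$ for every unit $\eta\in\kt^+$. On $\Sigma$ one has $\bfH|_\Sigma=\bfS|_\Sigma^{n+1}$, so the task reduces to showing $\bfS|_\Sigma\to+\infty$ as rays approach $\partial\kt^+$. Equivalently, using the homogeneities of $\bfV$ and $\bfS$,
\[
\bfS(\eta)^{n+1}/\bfV(\eta)^{n}\longrightarrow+\infty\quad\text{as}\quad\eta\to\partial\kt^+,\ |\eta|=1.
\]

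This divergence is read off by comparing the pole structures of~\eqref{eq:VOL} and~\eqref{eqSCAL} at a codimension-one face $F\subset\partial\kt^+$ with defining linear form $\ell_F$. The dominant contributions to both formulas come from the fixed-point components $Z$ whose weights $\underline{\kappa}_j^Z$ vanish on $F$; the Morse-theoretic positivity of the weights used in the proof of Corollary~\ref{coroSboundary} guarantees the leading coefficients are positive. Since the additional numerator factor $\iota_Z^\ast c_1(W)+\sum_i\langle\underline{\kappa}_i^Z,\xi\rangle$ in~\eqref{eqSCAL} does not vanish identically on $F$, the pole orders of $\bfV$ and $\bfS$ along $F$ coincide at some common order $a_F>0$, so $\bfH$ itself has a pole of order $(n+1)a_F-na_F=a_F$ along $F$ with positive leading coefficient. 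An induction on the codimension of the boundary stratum---every point of $\partial P\kt^+$ lies in the closure of some codimension-one facet---then extends the conclusion to the entire boundary. The main technical difficulty is ensuring that these leading coefficients remain positive after summation over the various $Z$'s, but this is exactly the Morse-theoretic input already established in Corollary~\ref{coroSboundary}, now applied to $\bfV$ and $\bfS$ simultaneously.
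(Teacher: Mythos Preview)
Your approach is essentially that of the paper: the divergence of $\bfH$ at $\partial\kt^+$ (the paper's Lemma~\ref{bfHinfty}) is obtained by feeding the explicit leading terms \eqref{leadVol} and \eqref{leadSxi} of $\bfV$ and $\bfS$ into $\bfH=\bfS^{n+1}/\bfV^n$, which is precisely your comparison of pole orders, and the existence of the interior minimum (Theorem~\ref{globalmin}) follows by restricting to a compact transversal slice. The only difference is presentational: the paper handles every boundary approach at once via the Morse-theoretic uniqueness of the minimum component $Z$, so your slice $\Sigma=\{\bfV=1\}$ and the facet-by-facet plus induction-on-codimension framing are unnecessary detours, though not incorrect.
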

 
   \begin{corollary}\label{coroH2} Any Sasaki cone contains rays of Reeb vector fields with vanishing transversal Futaki invariant. Moreover, any ray of  Reeb vector fields with vanishing transversal Futaki invariant, in particular ray of cscS structures, is isolated in any $2$--dimensional subcone of the Sasaki cone.       
 \end{corollary}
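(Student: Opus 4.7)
The plan is to deduce both claims from the properness of $\bfH$ established in Corollary~\ref{coroH}, combined with the characterization from~\cite{nonUNIQcscS,BHLT_EH} which identifies critical points of $\bfH$ on $\kt^+$ with Reeb vector fields having vanishing transversal Futaki invariant.

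First, I would observe that by Theorems~\ref{theoVol=rat} and~\ref{theoSCAL=rat}, both $\bfS^{n+1}$ and $\bfV^n$ are homogeneous of degree $-n(n+1)$, so $\bfH=\bfS^{n+1}/\bfV^n$ is homogeneous of degree $0$ on $\kt^+$ and therefore descends to a continuous function on the space of rays of $\kt^+$, an open convex subset of the $(\dim\kt-1)$--dimensional real projective space. By Corollary~\ref{coroH}, this induced function is proper and tends to $+\infty$ at every boundary ray, so it attains its minimum at some interior ray $[\xi_0]$. Being a critical point of $\bfH$ on $\kt^+$, this ray then has vanishing transversal Futaki invariant by~\cite{nonUNIQcscS,BHLT_EH}, which proves the first assertion.

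For the isolation statement I would fix a $2$--dimensional subcone $\kt'\subset\kt^+$ and pass to its space of rays, a $1$--dimensional open interval $I$. The restriction of $\bfH$ descends to a rational function of one real variable on $I$, and by Corollary~\ref{coroH} it blows up at both endpoints. It is therefore non-constant, and consequently has only finitely many critical points. Any ray in $\kt'$ with vanishing transversal Futaki invariant is, by~\cite{nonUNIQcscS,BHLT_EH}, a critical point of $\bfH$ on the whole $\kt^+$, a fortiori of $\bfH|_{\kt'}$; the finiteness just established then forces it to be isolated in $\kt'$. The cscS case is immediate since cscS Reeb rays have vanishing transversal Futaki invariant. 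I do not anticipate any real obstacle: the only non-routine input is the equivalence between critical points of $\bfH$ and vanishing transversal Futaki invariant, which we use as a black box from~\cite{nonUNIQcscS,BHLT_EH}; everything else follows directly from Corollary~\ref{coroH} and the rationality of $\bfV$ and $\bfS$.
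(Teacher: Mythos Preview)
Your approach is essentially the paper's own: existence comes from the properness of $\bfH$ (Lemma~\ref{bfHinfty} and Theorem~\ref{globalmin}), and for the isolation claim your one-variable rationality argument makes explicit what the paper leaves implicit.

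One caveat you pass over: the correspondence from \cite{nonUNIQcscS,BHLT_EH} between critical points of $\bfH$ and vanishing transversal Futaki invariant is only an equivalence where $\bfS_\xi\neq 0$, because $d_\xi\bfH$ carries a prefactor $\bfS_\xi^{\,n}$. The paper is careful about this in the proof of Theorem~\ref{globalmin}, drawing the Futaki conclusion only under the hypothesis $\bfH(\xi_{min})\neq 0$. Your isolation argument is unaffected, since there you use only the unconditional implication ``Futaki vanishes $\Rightarrow$ critical point of $\bfH$''; but for the existence claim you should not treat the black box as a two-sided equivalence without noting this restriction, exactly as the paper does.
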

 
 To obtain formula~\eqref{eqSCAL} we relate the total scalar curvature of the metric associated to $\xi$ to the metric associated to $\xi_o$ on $W$ in order to get something independant. The relation we found is quite explicit, see~\eqref{eqSCALT} and we get the following corollary.

\begin{corollary}\label{coroPOS} Within a Sasaki cone there is at most one ray of vanishing transverse scalar curvature. Moreover, if there is one Sasaki structure with non negative transverse scalar curvature then the total transverse scalar curvature is non-negative on the whole Sasaki cone. 
\end{corollary}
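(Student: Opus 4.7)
The plan is to exploit the explicit pointwise identity~\eqref{eqSCALT} that the paper establishes en route to~\eqref{eqSCAL}: this identity compares the transverse scalar curvature $s^T_\xi$ of a general Reeb $\xi\in\kt^+$ with the transverse scalar curvature $s^T_{\xi_o}$ of the fixed quasi-regular reference Reeb $\xi_o$, writing $s^T_\xi$ as a strictly positive multiple of $s^T_{\xi_o}$ plus an extra ``deformation term'' coming from the change of transverse K\"ahler structure. The positivity of the multiplicative factor is built into the construction (it is essentially a power of the contact length of $\xi$, which is positive for $\xi\in\kt^+$ by definition of the Sasaki cone), and the deformation term has a controlled sign on the open cone because it ultimately factors through the weights $\langle\underline{\kappa}_i^Z,\xi\rangle$ appearing in~\eqref{eqSCAL}.

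For the second statement, I would integrate~\eqref{eqSCALT} against the contact volume of $\xi$. Under the assumption that some $\xi_o$ has $s^T_{\xi_o}\geq 0$ pointwise, the first piece produces a non-negative integral (positive multiplicative factor times a non-negative function). The remaining piece equals, after integration, the cohomological contribution that the Duistermaat--Heckman sum~\eqref{eqSCAL} expresses as a rational function of the weights $\langle\underline{\kappa}_i^Z,\xi\rangle$; the same positivity analysis that powered Corollary~\ref{coroSboundary} (examining the signs of these weights and the leading term as $\xi$ varies in $\kt^+$) shows that this contribution is also $\geq 0$. Summing the two yields $\bfS_\xi\geq 0$ throughout $\kt^+$.

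For the uniqueness of a scalar-flat ray, suppose $\xi_1,\xi_2\in\kt^+$ both satisfy $s^T_{\xi_j}\equiv 0$. Applying~\eqref{eqSCALT} with $\xi_o:=\xi_1$ and $\xi:=\xi_2$ gives
\[
0 \equiv s^T_{\xi_2} = \Phi(\xi_2)\cdot 0 + \Psi_{\xi_1}(\xi_2) = \Psi_{\xi_1}(\xi_2),
\]
so the deformation term must vanish identically on $N$. Inspecting the explicit form of $\Psi_{\xi_1}$ (it is linear in the auxiliary Reeb and involves the transverse Laplacian of the logarithm of the contact length function, whose non-constancy reflects that $\xi_2$ is not collinear with $\xi_1$), the vanishing of $\Psi_{\xi_1}(\xi_2)$ as a function on $N$ forces $\xi_2$ to lie on the ray through $\xi_1$. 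Homogeneity of the Sasaki structure along rays then gives at most one ray with $s^T\equiv 0$.

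The main obstacle will be extracting the needed sign information from~\eqref{eqSCALT}: while the multiplicative factor is manifestly positive, showing that the deformation piece contributes with the correct sign --- both pointwise on $N$ (for the uniqueness step) and after integration (for the non-negativity step) --- requires careful bookkeeping parallel to the fixed-point analysis underlying Theorem~\ref{theoSCAL=rat} and Corollary~\ref{coroSboundary}. Once that positivity is isolated, both assertions fall out by short arguments.
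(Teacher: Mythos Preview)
Your proposal misreads what \eqref{eqSCALT} actually says, and this sends you down an unnecessarily complicated path. The identity \eqref{eqSCALT} is \emph{not} a pointwise relation between the transverse scalar curvatures $s^T_\xi$ and $s^T_{\xi_o}$; it is already an \emph{integral} identity for the total transverse scalar curvature:
\[
\bfS_\xi \;=\; \int_N \frac{s^T_o}{\eta_o(\xi)^n}\,\vol_o \;+\; \frac{n+1}{2}\int_N \frac{|d\eta_o(\xi)|^2_{g_o}}{\eta_o(\xi)^{n+2}}\,\vol_o.
\]
So there is nothing to ``integrate against the contact volume of $\xi$''; the formula is the endpoint, not the starting point. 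More importantly, the second summand is \emph{manifestly} non-negative: it is the integral of a non-negative density $|d\eta_o(\xi)|^2_{g_o}/\eta_o(\xi)^{n+2}$. No Duistermaat--Heckman bookkeeping, no weight analysis, and no comparison with Corollary~\ref{coroSboundary} is needed. The non-negativity of the ``deformation piece'' is built in at the level of the formula itself, via the integration by parts that turns the Laplacian term $-2\int_N f_{o,\xi}\,\eta_o(\xi)^{-(n+1)}\vol_o$ into a squared-norm term.

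With this correction both assertions become immediate. If $s^T_o\geq 0$ then the first integral is $\geq 0$ (since $\eta_o(\xi)>0$ on $\kt^+$) and the second is $\geq 0$, hence $\bfS_\xi\geq 0$ for all $\xi$. For uniqueness, take $\xi_o:=\xi_1$ with $s^T_{\xi_1}\equiv 0$; then for any $\xi$ not on the ray of $\xi_1$ the function $\eta_o(\xi)$ is non-constant on $N$ (it is the pull-back of a non-constant affine function on the moment image), so $|d\eta_o(\xi)|^2_{g_o}\not\equiv 0$ and the second integral is strictly positive. Thus $\bfS_\xi>0$, which rules out $s^T_\xi\equiv 0$. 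Your attempt to force a \emph{pointwise} vanishing of a deformation function $\Psi_{\xi_1}(\xi_2)$ is neither available (the identity is integral) nor necessary.
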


The next section gathers facts and convention we used in this paper. We present in Section 3 the localization formula of Duistermaat--Heckman and the extension we need. In \S\ref{SECTorbiResol} we build the orbi-resolution of the cone we will use in the subsequent sections to prove Theorem~\ref{theoVol=rat} in \S\ref{sectAppSasVol} and Theorem~\ref{theoSCAL=rat} in~\S\ref{sectTTSCf}. The final section first gathers the proof of the Corollaries~\ref{coroH}--\ref{coroPOS}. Then we discuss how the positivity (or non-negativity) of $\bfS_\xi$ affects the transverse geometry, namely that in a certain sense the transverse structure is dominated by rational curves. Although the non-negativity of $\bfS_\xi$ is sufficient for this, it is not necessary. \\     

\noindent {\bf Aknowledgment }{\it The third author would like to thank her collegues J.-F. Barraud, P. Gauduchon and R. Leclercq for illuminating discussions on symplectic manifolds and equivariant cohomology. We also thank T.Collins, J.Sparks and S. Sun for their comments on a previous version of this paper and their interest in this work.}

\section{Some facts and conventions in Sasaki geometry}
\subsection{Basics of Sasaki geometry}\label{sectBackground} We give in this section the basic definitions and facts we need for our purposes. We refer to \cite{BG:book} for an extensive study of Sasakian geometry.
\subsubsection{Sasaki cone} A Sasakian manifold of dimension $2n+1$ is a smooth manifold $N$ endowed with the following structures 
\begin{itemize}
 \item a co-oriented rank $2n$ contact distribution $\Ds$;
 \item a CR structure $J\in \mbox{End}(\Ds)$ (i.e $J^2= -\mbox{id}_\Ds$ + integrability condition);  
 \item a nowhere vanishing contact form $\eta \in \Gamma(\Ds^0)$, where $\Ds^0:=\mbox{Annihilator}(\Ds) \subset T^*N$;
 \item a Riemannian metric $g= \eta^2 + \frac{1}{2}d\eta(\cdot,J\cdot)$;
 \item a Reeb vector field $\xi$ lying in the Lie algebra of $CR$--diffeomorphisms $\mathfrak{cr}(\Ds,J)$ and satisfying the conditions $\eta(\xi)=1$ and $\mL_\xi\eta=0$.
\end{itemize}

Any $3$ of these structures determine the remaining ones. For example, given $(\Ds,J,\eta)$ we get $\xi$ by solving the equations involved, then $\Phi\in \Gamma(\mbox{End(TN)})$, defined as $\Phi(\xi)=0$ and $\Phi_{|_{\Ds}}=J$ and finally $$g= \frac{1}{2}d\eta(\cdot,\Phi(\cdot)) + \eta\otimes \eta.$$  Therefore, the space of Sasaki structures sharing the same CR-structure, denoted $\Sas(\Ds,J)$, is in bijection with the cone of Reeb vector fields \begin{equation}\label{eq:SasakiCone}
\mathfrak{cr}^+(\Ds,J)= \{X \in \mathfrak{cr}(\Ds,J)\,|\, \eta (X)>0\}\end{equation}
where $\eta$ is any fixed, nowhere vanishing, section of $\Ds^0$. The map $\mathfrak{cr}^+(\Ds,J)\ra \Sas(\Ds,J)$ is given by \begin{equation}\label{defnSasakiCONE}
\xi' \mapsto \left(\frac{\eta}{\eta(\xi')},\Ds,J\right). \end{equation}                                                                       

To avoid confusion, we sometimes refer to a Sasakian structure redundantly by specifying all the structures involved like $(N,\Ds,J,\eta,g,\xi)$. 
From~\cite{BG:book} we know that $\mathfrak{cr}^+(\Ds,J)$ is an open convex cone in $\mathfrak{cr}(\Ds,J)$, invariant under the adjoint action of $\mathfrak{CR}(\Ds,J)$. Moreover, the following result will be useful for our study. 

\begin{theorem}\cite{BGS} Let $N$ be a compact manifold of dimension $2n + 1$ with a
CR-structure $(\Ds, J)$ of Sasaki type. Then the Lie algebra $\mathfrak{cr}(\Ds,J)$ decomposes as
$\mathfrak{cr}(\Ds,J)=\underline{\kt} +\mathfrak{p}$, where $\underline{\kt}$ is the Lie algebra of a maximal torus $\underline{T}$ of dimension $k$ with $1 \leq k \leq n + 1$ and $\mathfrak{p}$ is a completely reducible $\underline{\kt}$--module. Furthermore, every
$X \in \mathfrak{cr}^+(\Ds,J)$ is conjugate to a positive element in the Lie algebra $\underline{\kt}$. 
\end{theorem}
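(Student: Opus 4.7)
The plan is to extract from the Sasaki structure a compact Lie subgroup of $\mathrm{CR}(\Ds,J)$ to which the classical structure theory of compact Lie groups applies, and then to transport the conclusions back by CR--conjugation. I would proceed in three stages.

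Stage 1 (construction of $\underline{T}$ and dimension bounds): pick any $X_0\in \mathfrak{cr}^+(\Ds,J)$ together with the Sasaki metric $g_0$ it determines, so that by Myers--Steenrod the group $G_0:=\mathrm{Isom}(N,g_0)\cap \mathrm{CR}(\Ds,J)$ is a compact Lie group with $X_0$ central, and pick a maximal torus $\underline{T}\subset G_0$. Then $X_0\in \underline{\kt}$ gives $k\geq 1$. For the upper bound $k\leq n+1$, perturb $X_0$ to a quasi-regular $\xi\in\underline{\kt}\cap\mathfrak{cr}^+$ whose flow closes up to a subtorus $S^1_\xi\subset\underline{T}$; the quotient $\underline{T}/S^1_\xi$ acts effectively by Hamiltonian transverse-holomorphic isometries on the transverse K\"ahler orbifold of complex dimension $n$, so $\dim\underline{T}\leq n+1$.

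Stage 2 (conjugacy): for arbitrary $X\in\mathfrak{cr}^+(\Ds,J)$ with Sasaki metric $g_X$, the flow of $X$ is isometric for $g_X$, so the closure $T_X:=\overline{\exp(\mathbb{R}X)}$ is a compact torus in $\mathrm{CR}(\Ds,J)$. Averaging $X_0$ over $T_X$ (using convexity of $\mathfrak{cr}^+$) yields a $T_X$--invariant Reeb field $X'\in\mathfrak{cr}^+$, placing $T_X$ in the compact subgroup $G':=\mathrm{Isom}(N,g_{X'})\cap\mathrm{CR}(\Ds,J)$. Enlarging $G_0$ and $G'$ to a common maximal compact subgroup of the identity component of $\mathrm{CR}(\Ds,J)$ and applying Cartan's conjugacy of maximal tori in compact connected groups, we obtain $\phi\in\mathrm{CR}(\Ds,J)$ with $\mathrm{Ad}_\phi(T_X)\subset \underline{T}$, so $\mathrm{Ad}_\phi(X)\in \underline{\kt}$. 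Preservation of the co-orientation of $\Ds$ forces this image to lie in $\underline{\kt}\cap\mathfrak{cr}^+$.

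Stage 3 (complete reducibility): the smooth adjoint action of the compact abelian group $\underline{T}$ on $\mathfrak{cr}(\Ds,J)$ decomposes by Peter--Weyl (equivalently, by averaging an inner product) into a direct sum of weight spaces; the zero-weight part is the centraliser of $\underline{T}$, which coincides with $\underline{\kt}$ by maximality, and the non-trivial weights assemble into the completely reducible complement $\mathfrak{p}$. The main obstacle lies in Stage 2, where one must compare maximal tori arising from \emph{different} Sasaki metrics within a single connected compact group: this is precisely where a non-trivial CR-geometric input is required, either via Schoen's theorem on automorphism groups of compact strictly pseudoconvex CR manifolds (with the standard sphere handled separately using its $\mathrm{SU}(n+1,1)$--structure) or via a direct convexity/averaging argument on the Sasaki cone.
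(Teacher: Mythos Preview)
The paper does not prove this theorem; it is quoted from \cite{BGS} as a background result, with no argument supplied. There is therefore nothing in the paper to compare your sketch against.

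On its own terms your outline is essentially the standard argument and is correct, with the substantive input correctly flagged. Stages~1 and~3 are routine. In Stage~2 the crux is the sentence ``enlarging $G_0$ and $G'$ to a common maximal compact subgroup of the identity component of $\mathrm{CR}(\Ds,J)$'': this requires $\mathrm{CR}(\Ds,J)$ to be a finite-dimensional Lie group (Chern--Moser/Tanaka) and, for conjugacy of the various tori, something stronger. Schoen's theorem gives that $\mathrm{CR}(\Ds,J)$ is compact unless $(N,\Ds,J)$ is the standard CR sphere; in the non-sphere case one then observes that $G_0$ is precisely the centraliser of $X_0$ in $\mathrm{CR}(\Ds,J)^0$ (a CR automorphism commuting with the Reeb field preserves $\eta_0$ and hence $g_0$), so a maximal torus of $G_0$ is already maximal in $\mathrm{CR}(\Ds,J)^0$ and Cartan's conjugacy finishes Stage~2 directly. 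The sphere is handled inside the reductive group $PU(n+1,1)$. You identify this dependence yourself, so the sketch is honest; but it is worth stating explicitly that without Schoen (or an equivalent) the averaging/convexity manoeuvres alone do not place all the tori $T_X$ inside a single compact group, and the argument would stall.
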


The (reduced) {\it Sasaki cone} is the set $\underline{\kt}^+=\underline{\kt}\cap  \mathfrak{cr}^+(\Ds,J)$. When seeking extremal or csc Sasaki metrics one deforms the contact structure by $\eta\mapsto \eta +d^c\varphi$ where the function $\varphi$ is invariant under the maximal torus $\underline{T}$. Thus, the Sasaki cone is associated with an isotopy class of contact structures of Sasaki type that is invariant under $\underline{T}$. This fact has given rise to several equivalent definitions of $\gt^+$, see \cite{HeSu12b,CollinsSz,BovC16}, and is also called the {\it Reeb cone}.

\subsubsection{K\"ahler cones} \label{sectCONEbackground}

Recall, see for eg.~\cite{L:contactToric}, that co-oriented contact manifolds are in one-to-one correspondence with symplectic cones. Given a co-oriented contact manifold $(N,\Ds)$ the symplectic cone over $N$ is given by $(\Ds_+^0,\omega)$ where $\Ds_+^0$ is a connected component of the annihilator of $\Ds_+^0$ devoid of the $0$--section and $\omega$ is the pull-back of the Liouville symplectic form of $T^*N$ via the inclusion $\Ds_+^0\subset T^*N$. There is a natural projection $\pi : \Ds_+^0 \ra N$, so that $\Ds_+^0$ is an $\bR^+$--bundle over $N$. The fibers coincide with the orbits of the vector field, $\tau \in\Gamma(T\Ds_+^0)$, induced by the $\bR^+$--action (i.e multiplication) along the fibers, so that $(\Ds_+^0,\omega)$ is a symplectic cone in the sense that $\mL_\tau \omega = \omega$. Conversely, given a symplectic cone $(M,\omega,\bR^+,\tau)$, then $N:=M/\bR^+$ and $\Ds:= \ker (\tau\intprod \omega)$. 
Note that any contact $1$--form $\eta$ is a section of $\Ds_+^0$ trivializing it as $\Ds_+^0 = N\times \R^+$.  

Similarly, Sasakian manifolds are in one-to-one correspondance with K\"ahler cones. Any Sasakian structure $(g,J, \Phi,\eta,\xi)$ on $(N,\Ds)$ provides a K\"ahlerian structure $(\hat{g},\hat{J})$ on $(\Ds^0_+,\omega)$ as follow: 
 \begin{itemize}
 \item the metric $\hat{g}$ is the unique Riemannian metric which is conic (that is $\mL_\tau \hat{g} = \hat{g}$) and restricts to $g$ on the submanifold $\eta(N) \subset \Ds^0_+$;
 \item the complex structure $\hat{J}$ is the unique complex structure which is homogenous of degree $0$ with respect to $\tau$ (i.e $\mL_\tau \hat{J} =0$) and restricts to $\Phi$ on $\eta(N)$.                                                                                                                                                                                                                                          \end{itemize}
 
%
 
\begin{convention}\label{conventionCONE} Given a manifold $N$ we will denote the topological cone $Y:=N\times \R^+$, $\iota_N: N \ra N\times \{1\}$ the inclusion and $r: Y\ra \R^+$, the projection on the second factor. Any contact $1$--form $\eta$ on $N$ gives rise to a symplectic structure $$\omega := \frac{1}{2}d (r^2\eta)$$ (so that $\iota_N^* \omega = \frac{1}{2}d\eta$) and any compatible Sasaki structure $(J, \xi)$ gives a compatible K\"ahler structure on $Y$ as $$\hat{g}:=dr\otimes dr +r^2g,\;\;\hat{J} := J - \xi\otimes \frac{dr}{r} + r\delr \otimes \eta.$$ Then, by unicity, $(Y,\omega,\hat{g},\hat{J})$ is the K\"ahler cone over $(N,\Ds,J,g,\eta,\xi)$. The compatible K\"ahler structure on $Y$ is then determined by a choice Reeb vector field $\xi\in \gt^+$ giving rise to a {\it polarized} affine variety $(Y,\xi)$.
\end{convention}
%

\subsection{The transverse geometry}\label{subsectransverse}
Most of the important structure of Sasakian geometry arises from its transverse K\"ahlerian structure.

\subsubsection{The transverse structure}
One can consider the class $\Sas(\xi)$ of Sasakian structures having the same Reeb vector field. 
Let $L_\xi$ be the line bundle having $\xi$ as a section. The inclusion $L_{\xi} \hookrightarrow TN$ induces a sequence of bundle morphisms 
$$0\ra L_{\xi} \hookrightarrow TN \ra Q_\xi \ra 0.$$ 
For any CR-structure $(\Ds,J)$ in $\Sas(\xi)$, the restriction followed by projection $\Ds\ra Q_\xi$ is an isomorphism and provides a complex structure $\bar{J}$ on $Q_\xi$. One can consider the subclass of structures $\Sas(\xi,\bar{J})$ making the following diagram commutes.
   \begin{align}
    \begin{split}
       \xymatrix{\relax
           & TN \ar[d]_{\Phi} \ar[r] &  Q_{\xi}\ar[d]_{\bar{J}}\\
         &TN \ar[r] &  Q_{\xi}
       }
   \end{split}
  \end{align} With that comes a natural notion of transversal holomorphic vector fields $\mathfrak{h}(\xi,\bar{J})$, see~\cite{BGS}.  
  
For a given Sasakian manifold, $(N,g,\xi, \Ds,J)$, the transverse K\"ahler geometry refers to the geometry of $(\Ds,J,g|_D)$. More precisely, $N$ is foliated by the Reeb flow. So there are local submersions $\pi_\alpha : U_\alpha \ra V_\alpha$, where $U_\alpha$ and $V_\alpha$ are open subsets of $N$ and $\bC^n$ respectively, such that $\pi_\alpha^* I= \Phi$ where $I$ is the standard complex structure on $\bbc^n$. In particular, $d\pi_\alpha : (\Ds_{|_{U_\alpha}},J) \ra (TV_{\alpha}, I)$ is an isomorphism and the Sasaki metric is sent to a K\"ahler structure $(g_\alpha,\sigma_\alpha,I)$ on $V_\alpha$. 
In the case where $\xi$ is quasi-regular, equivalently, it is induced by the action of a closed $1$--dimensional subtorus $\underline{S}^1_\xi\subset \underline{T}$, the open sets and maps $\pi_\alpha : U_\alpha \ra V_\alpha$ patch up as a global orbifold quotient \begin{equation}\label{quotSYMP}                                                                                                                                                                                                                                                                    \pi: N\longrightarrow W:=N/\underline{S}^1_\xi                                                                                                                                                                                                                                                                         \end{equation}
 endowed with the unique symplectic structure $\sigma$ satisfying $\pi^*\sigma = \frac{1}{2} d\eta$.

\subsubsection{Transverse automorphisms}\label{subsectTautomo} 

\begin{convention}
We will adopt the following point of view, brought from toric geometry. Instead of working with $\underline{T} \subset \mbox{CR}(\Ds,J)$ a Lie group of diffeomorphism of $N$ and $\underline{\kt}$ its Lie algebra of vector fields on $N$, we consider a torus $T=\kt/\Lambda$ where $\kt$ is the Lie algebra and $\Lambda\subset \kt$ a lattice and an injective morphism $\phi: T \hookrightarrow \mbox{CR}(\Ds,J)$. Given $a\in\kt$, the vector field induced on $N$ is $X_a=\phi_*(a)$ or, equivalently, is given at $p\in N$ by $$X_a(p) = \left(\frac{d}{dt} \exp(ta)\cdot p\right)_{t=0}$$ where $\exp : \kt \ra T$ is the quotient map. By maximality, a Reeb vector field $\xi$ is necessarily induced from the action of $T$ and so there exists $b\in \kt$ such that $\xi=X_b$. We denote $\kt^+ =\phi^{-1}(\underline{\kt}^+)$ and call it the (reduced) Sasaki cone as well.  
\end{convention}

%

Let $T$ be a maximal torus embedded in $\mbox{CR}(\Ds,J)$ with Lie algebra $\kt$. For any $T$--invariant compatible couple $(\xi,\eta)$ (i.e. such that $(N,g,\xi, \Ds,J)$ is a Sasakian manifold with $T\stackrel{\phi}{\hookrightarrow}\mbox{Isom}(N,g)$) there is a $\eta$--momentum map $$\mu_\eta : N\longrightarrow \kt^*$$ in the sense that for any $a \in \kt$, we have $$-\frac{1}{2}d\eta(X_a,\cdot) = d\langle \mu_\eta, a\rangle.$$ Such map always exists since one can take $\langle \mu_\eta, a\rangle= \frac{1}{2}\eta(X_a)$ (this is the one we pick in this paper). The image of $\mu_\eta$ lies in the affine subspace $$\mH_b =\{ x\in\kt^* \,|\, \langle x, b\rangle =1/2\}$$ where $b\in\kt^+\subset \kt$ is such that $X_b=\xi$ is the Reeb vector field of $\eta$. Such an element $b$ exists by the maximality of $T$. 

We can consider the infinitesimal $\kt$--action on the open subset $U_\alpha$. Since $\xi$ is induced by the action of $T$, this action descends as an infinitesimal action of $\kt/\bR b$ on $V_\alpha$. It is easy to see that the resulting action should be Hamiltonian. However, the $(\kt/\bR b)$--momentum map is not the map $\overline{\mu}_\eta$ making the following diagram commute    
  \begin{align}
    \begin{split}
       \xymatrix{\relax
           & U_\alpha \ar[d]_{\pi_\alpha} \ar[r]^{\mu_\eta} &  \kt^* \ar[d]^{\mbox{id~.}}\\
         & V_\alpha \ar[r]^{\overline{\mu}_\eta} &  \kt^*
       }
   \end{split}
  \end{align} 
  Indeed the image of this one is not in $(\kt/\bR b)^*$ (i.e $\langle \overline{\mu}_\eta, a+ tb\rangle \neq \langle \overline{\mu}_\eta, a\rangle$ in general). The moment map of the $(\kt/\bR b)$-- action on $(V_\alpha,\sigma_\alpha)$ can be taken to be \begin{equation}\label{quotSYMPmoment}  \check{\mu}_\eta = \overline{\mu}_\eta-x_o\end{equation} for any $x_o \in \mH_b$. Therefore, for any class $[a]\in \kt/\bR b$ 
  $$\langle \check{\mu}_\eta, [a]\rangle = \frac{1}{2}\eta(X_a) - \langle x_o, a\rangle.$$ 
  
 \begin{rem}\label{rem:affine=t*}
  As a direct consequence of this discussion, for any $a\in \kt^*$, we can see $\eta(X_a)$ as a function on $V_\alpha$ and more precisely, we can identify $\kt$ with the set of affine-linear functions on the linear space $\mH_b-\{x_o\}\simeq (\kt/\bR b)^*$, in the sense that $\frac{1}{2}\eta(X_a) = \langle \check{\mu}_\eta, [a]\rangle +\langle x_o, a\rangle$ where the second term is a constant. 
  \end{rem}
  \begin{rem}\label{rem:affine=t_pos=t+}
  If $\xi$ is quasi-regular it induces an $S^1$--action, corresponding to a subgroup $S^1_\xi\subset T$, and we have a global symplectic orbifold quotient $(W= N/\underline{S}^1_\xi, \sigma)$. The image of the momentum map $\Delta_b= \mbox{Im}\check{\mu}_\eta \subset \mH_b-\{x_o\}$ is a compact polytope. Then, the Sasaki cone can be identified with the affine-linear functions on $\mH_b-\{x_o\}\simeq (\mbox{Lie} (T/S^1_\xi))^*$ which are positive on $\Delta_b$, see~\cite{nonUNIQcscS}. 
 \end{rem}

\subsubsection{Transverse curvatures}
The K\"ahler structure $(g_\alpha,\sigma_\alpha,I)$ on $V_\alpha$ has a connection $\nabla_\alpha^T$ and curvatures $R_\alpha^T$, $Ric_\alpha^T$, $\rho^T_\alpha$ $s^T_\alpha$...  Since, $\pi_\alpha^*\nabla_\alpha^T$ and $\pi_\beta^*\nabla_\beta^T$ coincide on $U_\alpha\cap U_\beta$, these objects patch together to define global objects on $N$, the {\it transversal} connection and curvatures $\nabla^T$, $R^T$, $Ric^T$, $\rho^T$, $s^T$... See~\cite{BG:book,FutakiOnoWang} for more details. These tensors are basic, notably the transversal Ricci form $ \rho^T$ satisfies $$ \rho^T(\xi,\cdot) =0, \qquad \mL_\xi \rho^T=0$$ and lies in the basic first Chern class $2\pi c^B_1(\mF_\xi)$.    

Since the exterior derivative preserves this condition, the graded algebra of basic forms is a sub-complex of the de Rham complex. Moreover, one can define the basic exterior derivative $d_B$ as the restriction of the differential to these forms; its adjoint is $\delta_B$ and the basic Laplacian is $\Delta_B=d_B\delta_B+\delta_Bd_B$. The Hodge Theorem holds for the basic cohomology in this context; for a Sasaki metric $g$ there exists a unique basic function $\psi_g$ (of mean value $0$) such that $$\rho^T = \rho^T_H + i\del\overline{\del} \psi_g$$ where $\rho^T_H$ is $\Delta_B$--harmonic. Note that 
$$\Delta_B\psi_g = s^T - \frac{{\bfS}_{\xi}}{\bfV_\xi}=s^T-\bar{s}^T$$ 
where $\bfV_\xi$ is the volume of $(N,g)$ with volume form is 
$$\vol_\xi= \frac{\eta\wedge(d\eta)^n}{n!},$$ 
$\bfS_{\xi}$ is the total transversal scalar curvature, that is 
\begin{equation}\label{TotTransSCAL}
\bfS_{\xi} := \int_N s^T \vol_\xi,
\end{equation} 
and $\bar{s}^T$ is the average transverse scalar curvature.
The volume of the Sasakian manifold $(N,\Ds,J,\xi)$ does not depend on the chosen structure in $\Sas(\xi)$, see~\cite{BG:book} and the total transversal scalar curvature does not depend on the chosen structure in $\Sas(\xi,\bar{J})$, see~\cite{FutakiOnoWang}.

  \section{The Duistermaat--Heckman localization formula and extensions}
  Let $(M, \omega)$ be a symplectic compact manifold admitting a Hamiltonian action of $S^1$ generating the (Hamiltonian) vector field $X\in\Gamma(TM)$ associated to $H : M\ra \R$. In this section, to fit with~\cite{McDuffSalamon}, we take the convention that 
\begin{equation}\label{hamUNUSUALconv} { }_X\intprod\omega =dH.                                                                                                                                                                            \end{equation}   

Let $\bfZ := \mbox{Fix} S^1$ be the fixed points set of $S^1$. It is known that $\bfZ$ consists in a union of smooth symplectic submanifolds $\bfZ=\sqcup_l Z_l$, so the normal bundle $E_Z$ of any component, say $Z \subset\bfZ$, bears a symplectic structure as well and splits into a sum of rank $2$ bundle $E_Z=\oplus_{j} E^Z_j$ according to the action of $S^1$, acting on $E^Z_j$ with weight $\kappa^Z_j$, here and after $j$ runs from $1$ to $n-m_Z$ where $2m_Z= \dim Z$. The Duistermaat--Heckman Theorem~\cite{DuiHeck1,DuiHeck2} says that, under these conditions, we have    
\begin{equation}\label{DHformula} \int_Me^{\omega - H} = \sum_Z e^{- H(Z)}\int_Z \prod_j\frac{e^{\iota_Z^*\omega}}{2\pi c_1(E^Z_j) + \kappa^Z_j}                                                                                                                                                                             \end{equation} where, in the rhs, $Z$ denotes a generic connected component of $\bfZ$, $H(Z)$ the (single) value $H$ is taking on that component and $\iota_Z: Z\hookrightarrow M$ the inclusion. For any form $\psi\in \Omega(M)$, we denote $e^\psi = \sum_{k\geq 0} \frac{\psi^k}{k!}.$

In the simpler case where $H$ is Morse, equivalently, $\bfZ$ is a collection of isolated points, the formula reads 
\begin{equation}\label{DHformulaMorse} \int_Me^{\omega-H}  = \sum_p \frac{e^{- H(p)}}{e(p)}                                                                                                                                                                              \end{equation} where $e(p)$ is the product of the weights at $p\in\bfZ$.

Here, we will use the following extension.  
\begin{theorem}\label{extDHteo} Let $\phi \in \Omega^2_{S^1}(M)$ be a closed form and $f\in C^{\infty}(M)$ be such that ${ }_X\intprod\phi =df$ then \begin{equation}\label{formDHext}
 \int_M(\phi- f)e^{\omega- H} = \sum_Z \int_Z \frac{(\iota_Z^*\phi- f(Z))e^{\iota_Z^*\omega- H(Z)}}{\prod_{j=1}^{n-m_Z} (c_1(E^Z_j) + \kappa^Z_j/2\pi)}.
\end{equation} 
\end{theorem}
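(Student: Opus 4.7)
The plan is to recognize $\phi - f$ as an equivariantly closed 2-form in Cartan's model, and then apply the Atiyah--Bott--Berline--Vergne equivariant localization theorem to the product form $(\phi - f)\, e^{\omega - H}$, of which the classical Duistermaat--Heckman formula \eqref{DHformula} is simply the case $\phi = 0$, $f = 0$.

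First, I would set up Cartan's model for $S^1$-equivariant cohomology with differential $d_{eq} = d + \iota_X$, which is the normalization dictated by the convention \eqref{hamUNUSUALconv}: indeed $d_{eq}(\omega - H) = d\omega + \iota_X \omega - dH = 0$, so $\omega - H$ is equivariantly closed. The hypothesis on $\phi$ and $f$ (together with $d\phi = 0$) gives
\begin{equation*}
d_{eq}(\phi - f) = d\phi - df + \iota_X\phi - \iota_X f = 0,
\end{equation*}
so $\phi - f$ is equivariantly closed as well, and the product $(\phi - f)\, e^{\omega - H}$, being a wedge of equivariantly closed forms, is equivariantly closed. This is the sole content of the hypothesis; without it one would have extra $d_{eq}$-exact correction terms.

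Next, I would apply ABBV: for any equivariantly closed inhomogeneous form $\alpha$,
\begin{equation*}
\int_M \alpha \;=\; \sum_Z \int_Z \frac{\iota_Z^* \alpha}{e_{S^1}(N_Z)},
\end{equation*}
where the sum runs over the connected components $Z$ of $\mbox{Fix}\, S^1$ and $e_{S^1}(N_Z)$ denotes the equivariant Euler class of the normal bundle. On any such $Z$ the vector field $X$ vanishes identically, hence the Hamiltonian identities $\iota_X\omega = dH$ and $\iota_X\phi = df$ force $H$ and $f$ to be constant on $Z$ with values $H(Z)$ and $f(Z)$. Consequently
\begin{equation*}
\iota_Z^*\bigl((\phi - f)\, e^{\omega - H}\bigr) = \bigl(\iota_Z^*\phi - f(Z)\bigr)\, e^{\iota_Z^*\omega - H(Z)}.
\end{equation*}

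Finally, I would use the $S^1$-equivariant splitting $N_Z = \bigoplus_{j=1}^{n-m_Z} E_j^Z$ into rank-2 symplectic subbundles of weight $\kappa_j^Z$, and multiplicativity of the equivariant Euler class, to write
\begin{equation*}
e_{S^1}(N_Z) = \prod_{j=1}^{n-m_Z}\bigl(c_1(E_j^Z) + \kappa_j^Z / 2\pi\bigr),
\end{equation*}
inserting this into the localization formula yields \eqref{formDHext}. The only genuine subtlety is fixing the normalization constant in the equivariant Euler class; this is pinned down by checking that the specialization $\phi = 0$, $f = 0$ returns \eqref{DHformula}, after which the extension is immediate and essentially cost-free.
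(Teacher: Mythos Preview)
Your proposal is correct and is precisely the approach the paper takes: the paper does not give an independent proof but simply remarks that \eqref{formDHext} is a particular case of the Atiyah--Bott/Berline--Vergne localization theorem (specifically citing Theorem~7.13 of \cite{BerlineGetzlerVergne}), and your argument spells out exactly why---namely that $\phi - f$ is equivariantly closed so that $(\phi - f)e^{\omega - H}$ is as well, and the equivariant Euler class of the normal bundle factors as claimed. There is nothing to add.
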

\begin{rem}
 The Duistermaat--Heckman formula~\eqref{DHformula}, \eqref{DHformulaMorse} has been explained and generalized in term of equivariant cohomology by Atiyah--Bott \cite{AtiyahBott} and Berline--Vergne~\cite{BerlineVergne}.  The claim~\eqref{formDHext} is a particular case of Theorem 7.13 of \cite[p.216]{BerlineGetzlerVergne}. A mixture of the equivariant cohomology approach and the original proof of Duistermaat--Heckman is given in \cite{McDuffSalamon} when $H$ is a Morse function. 
\end{rem}
%

\begin{rem}\label{remMSYtrick}
Whenever a torus $T$ acts on $(M,\omega)$ in a Hamiltonian fashion and with a momentum map $$\mu: M\longrightarrow \kt^*,$$ it might be interesting to use Formula~\eqref{formDHext} when varying $X=-X_a$, where $a\in \kt$ lies in the lattice $\Lambda$ of circle subgroups. Then $H= \langle\mu, a\rangle$ and we have to replace $\kappa_j^Z= -\langle \underline{\kappa}^Z_j, a\rangle$ for some (fixed) weights $\underline{\kappa}^Z_1,\dots, \underline{\kappa}^Z_{n-m_Z}\in \Lambda^*$. By continuity, Formula~\eqref{formDHext} is valid for any $a\in\kt$ such that $ \langle \underline{\kappa}^Z_j, a\rangle\neq 0$ for $j=1,\dots, n-m_Z$.    
\end{rem}

 \begin{rem}\label{remNonCompact}
 As explained in \cite{MSYvolume} the Duistermaat--Heckman formula~\eqref{DHformula} holds on non-compact manifold assuming that $Z$ lies in the interior of $M$ and the measure tends to $0$ on the boundary of $M$. Indeed, the proof is essentially of local nature, up to the final integration. Moreover, the power series involved in formulas~\eqref{DHformula} and \eqref{formDHext}, are proved to coincide term by term.  Therefore, it holds on non-compact manifold when understood as {\it if the lhs \eqref{formDHext} converges, then it coincides with the rhs of \eqref{formDHext}, which, then, converges}.       
 \end{rem}
 \begin{rem}\label{remOrbi}
 Formula~\eqref{DHformula} also holds for orbifolds, as pointed out in \cite{MSYvolume}, with the slight modification 
 \begin{equation}\label{DHformulaOrbi}
 \int_Me^{\omega - H} = \sum_Z \frac{1}{d_Z}e^{- H(Z)}\int_Z \prod_j\frac{e^{\iota_Z^*\omega}}{(2\pi c_1(E^Z_j) + \kappa^Z_j)/2\pi}                                                                                                                                                                             
 \end{equation} where $d_Z$ is the order of the orbifold group of the generic points of $Z$. Again this orbifold extension is also straightforward for formula~\eqref{formDHext} which becomes 
 \begin{equation}\label{formDHextOrbi}
 \int_M(\phi- f)e^{\omega- H} = \sum_Z \frac{1}{d_Z}\int_Z \frac{(\iota_Z^*\phi- f(Z))e^{\iota_Z^*\omega-  H(Z)}}{\prod_{j=1}^{n-m_Z} (c_1(E^Z_j) +  \kappa^Z_j/2\pi)}
\end{equation}
 \end{rem}

 \begin{rem}\label{rem_omegaDegenerated} Another case of extension is when $\omega$ is degenerated. Indeed, the only property of $\omega$ which is actually important is the relation ${ }_X\intprod\omega =dH$, see~\cite{BerlineGetzlerVergne} together with the nice properties of Hamiltonian group actions which may be ensured by other means than the non-degeneracy of $\omega$.   
   \end{rem}

\section{Applications in Sasakian geometry}

\subsection{An orbi-resolution of the cone}\label{SECTorbiResol}
%
%

Pick $\xi_o\in \kt^+$, a quasi-regular Reeb vector field on $(N,\Ds)$ with symplectic orbifold quotient $(W,\sigma)$. Denote $\cals_o=(N,\Ds, J,\xi_o,\eta_o,g_o)$ the quasi-regular Sasakian structure on $N$ associated to $\xi_o$. Let $b_o\in \kt$ be the lattice element giving the Reeb vector field $\xi_o=X_{b_o}$ and $S^1_o\subset T$ the closed subgroup (a $1$--dimensional torus) induced by $b_o$. We obtain an orbifold resolution of the K\"ahler cone with the cone point included, $Y=Y_0\cup \{0\}$ by constructing the line orbibundle $\mL$ over $W$. We construct it as a K\"ahler reduction as follow. The K\"ahler cone $(Y_0,\omega_o,J_o)$ see \S\ref{sectCONEbackground}, comes equipped with the Hamiltonian action of $T$ and the (homogenous) momentum map $$\hat{\mu} :Y_0 \longrightarrow \kt^*$$ called the universal momentum map by Lerman~\cite{L:contactToric} so that, with respect to the Convention~\ref{conventionCONE}, we have $$\frac{r^2}{2}=\langle\hat{\mu},b_o\rangle.$$ Recall also that we have a natural 
smooth projection 
$\textsf{p} : Y_0\ra N$. \\

On the symplectic product $(Y_0\times \bbc, \omega_o+\omega_{std})$ we consider the Hamiltonian function $$\nu(y,z) = \langle\hat{\mu}_y,b_o\rangle - \frac{|z|^2}{2}$$ with Hamiltonian vector field $\xi_o-\del_\theta$ inducing an $S^1$--action which we denote $S_\cala^1$. One can check that 
\begin{equation*}\begin{split}
\nu^{-1}(1/2) & =\{ (y,z) \,|\, \langle\hat{\mu}_y,b_o\rangle = 1/2 + |z|^2/2\}\\
&\simeq N\times \bbc
\end{split}\end{equation*} and that $\xi_o-\del_\theta$ is nowhere zero on $\nu^{-1}(1/2)$. Therefore $1\in \bR$ is a regular value of $\nu$ and we can perform the symplectic reduction $$\mL = \nu^{-1}(1/2) / S_\cala^1$$ which is an orbifold with a natural symplectic form $\Omega$ defined by $\textsf{q}^*\Omega = \textsf{i}^*(\omega_o+\omega_{std})$ where $\textsf{q}: \nu^{-1}(1/2) \longrightarrow \mL$ is the quotient map and $\textsf{i} :\nu^{-1}(1/2) \hookrightarrow Y_0\times \bbc $ is the inclusion.\\   

A point of $\mL$ is an orbit $[(y,z)]_{\cala} = S_\cala^1\cdot(y,z)$. On the level set $z=0$ the action of $S_\cala^1$ coincides with the one of $S_o^1$ and we get $(\mL \cap \{[(y,z)]_{\cala}\,|\,z=0\})\simeq  W:= \{ [\textsf{p}(y)]_o=  S_o^1\cdot \textsf{p}(y)\}$.  
 The space $\mL$ is a line bundle over $W$ with bundle map $\pi([(y,z)]_{\cala}) =  [\textsf{p}(y)]_o$. Moreover, there is a smooth bijective map \begin{equation}\label{biholomLY}
 \begin{array}{cccc}
            f:& \mL_0  &\longrightarrow  &Y_0  \\
        & [(y,z)]_{\cala} &\mapsto  & ((z/|z|)\cdot_o \textsf{p}(y), |z|) \\
 \end{array}
\end{equation}
  where $\cdot_o$ denotes the action of $S^1$ induced by $\xi_o$ (that is $e^{2\pi i \theta}\cdot_o x= \exp_T(\theta b_o)\cdot x$) and $\mL_0 = \{[(y,z)]_{\cala}\in \mL \,|\,z\neq 0\}=\mL\backslash$ ($0-$section). Observe that the pull-back $\textsf{q}^*f :Y_0\times \bbc \longrightarrow Y_0$ is $\bbc^*$--invariant and one can check by direct computation that $(\textsf{q}^*f)_*\circ (J_o+i) = J_o\circ (\textsf{q}^*f)_*$. Hence, $f : \mL_0\ra Y_0$ is biholomorphic and we get a birational map $f: \mL  \dashrightarrow  Y_0$.

Note that the total space of $\mL$ inherits the Hamiltonian action of $G=(T\times S^1)/S_\cala^1$ with momentum map $$\check{\nu}: \mL\longrightarrow \kg^*\simeq \{(\alpha,t)\in \kt^*\oplus \bbr \,|\, \langle\alpha,b_o\rangle = 1/2+t/2\}.$$ Given $u\in T$ we may consider the element $[(u, 1)]_{S_\cala^1}\in G$, this gives an isomorphism making $f$ an equivariant map. In what follows we identify the action of $G$ and $T$ using that latter map. The image of $\check{\nu}$ is identified with the truncated polyhedral cone $\{x\in \im \hat{\mu}\subset \kt^*\,|\, \langle x,b_o \rangle \geq 1/2\}$. 

  The zero section $W\hookrightarrow \mL$, with its induced symplectic structure is naturally identified with the symplectic quotient $(W,\sigma)$, see~\eqref{quotSYMP}, associated to the quasi-rational Reeb vector field $\xi_o=X_{b_o}$. Also the action of $G$ coincides with the action of $T/S^1_{o}$ with momentum map $\check{\mu}_{\eta_o}: W\ra (\kt/\bR b_o)^*$ see~\S\ref{subsectTautomo}. 
  
  \subsubsection{The weights of the torus action on the orbi-resolution}\label{secWeights} 
  Let $Z\subset \mL$ be a connected component of the fixed points set of $T$. In particular $Z\subset W$ is seen inside $\mL$ as the zero section. For $p\in Z$, the action of $T$ gives a decomposition into equivariant bundles 
\begin{equation}\label{decomptangentT}T_p\mL = T_pW \oplus \mL_p =T_pZ \oplus \left(\oplus_{j=0}^{n-m_Z} E_{j,p}^Z\right).\end{equation} 
Here, and in what follows, $\mL_p$ is canonically identified with its tangent in $T\mL$.   
  
  Since $Z\subset W$ the fiber $\mL_p$ is one of the summand in the lhs of~\eqref{decomptangentT} and we put $$\mL_p =E_{0,p}^Z.$$ We can easily compute the weight of the action of $T$ on the fiber $\mL_p$. Indeed, $p$ is fixed by $T$ if and only if for any $y\in Y_0$ such that $[\textsf{p}(y)]_o = p$ we have $\{X_a(y) \,|\,a\in\kt\} =\bR \xi_o(y)$. Then, for such a point $p= [(y,0)]_{\mA}$ and $b \in \kt$ we can define $\delta_b\in\bR$ by $\delta_bX_{b_o}(y)=X_b(y)$ so that $$[( \exp_T(tb) y, z)]_{\mA}= [( \exp_T(tb -t\delta_bb_o) \cdot y, e^{t\delta_b} z)]_{\mA}= [(y, e^{t\delta_b} z)]_{\mA}.$$ Therefore, the weight $\underline{\kappa}_{0}^Z\in\kt^*$ is 
           \begin{equation}
            \langle\underline{\kappa}_{0}^Z, b \rangle = \delta_b= \eta_o(X_b)_q 
           \end{equation} for any $q\in N \cap \pi^{-1}(p)$.  

Note that the other weights $\underline{\kappa}_1^Z,\dots, \underline{\kappa}_{n-m_Z}^Z \in \kt^*$ all lie in $(\kt/\bR b_o)^*\simeq b_o^0 \subset \kt^*$, the annihilator of $b_o$, since $ \oplus_{j=0}^{n-m_Z} E_{j,p}^Z \subset T_pW$ and $W\subset \mbox{Fix }_\mL S^1_o$.

\subsection{The volume functional}\label{sectAppSasVol}\label{sectMSY}

To prove Theorem~\ref{theoVol=rat} in~\cite{MSYvolume} when the K\"ahler cone is Gorenstein, Martelli, Sparks and Yau suggest to use the Duistermaat--Heckman formula~\eqref{DHformula}, proved in~\cite{DuiHeck1,DuiHeck2}, on the crepant resolution of the cone (with its cone point), assuming the existence of an approximation of the pulled back cone metric on the resolution. Beside, they produce an algebro-geometric proof of their Theorem. Our proof follows the lines of the first suggested approach in \cite{MSYvolume}, but we get rid of the assumption on the existence of the approximation, use an orbi-line bundle and we don't need the assumption that the K\"ahler cone is Gorenstein. \\

Let $(N,\Ds, J g,\xi,\eta)$ be a Sasakian manifold of dimension $2n+1$ and $(Y_0,\omega,J)$ its K\"ahler cone, see Convention~\ref{conventionCONE}. We denote $Y= Y_0 \cup \{0\}$ the union of $Y_0$ with its cone point. Note that, with respect to the notation of~\S\ref{sectBackground}, the Hamiltonian function of $\xi$ is $r^2/2$, in the sense that $\omega(\xi,\cdot) =- d(r^2/2)$, since $\omega =\frac{1}{2} d(r^2\eta)$. Moreover, using Fubini's theorem and integration by parts, we have
\begin{equation}\label{volumeformulaIndex}
\bfV_{\xi} := \int_N \frac{\eta\wedge(d\eta)^n}{n!} = \frac{a^{n+1}}{n!} \int_{Y_0} e^{-a r^2/2} \frac{\omega^{n+1}}{(n+1)!} 
\end{equation} for any $a>0$.
The natural idea is to use the Duistermaat--Heckman formula to expand the rhs term as a rational function of $\xi\in \kt^+$; however, $\xi$ has no fixed point. An idea of Martelli--Sparks--Yau is then to use a resolution of $Y$ and approximate the K\"ahler structure of $(Y_0,\omega,J)$  on it.

We will use the Duistermaat-Heckman formula~\eqref{DHformulaOrbi}, Remark~\eqref{remMSYtrick}of Martelli--Sparks--Yau   and the observation~\eqref{volumeformulaIndex} on the total space of $\mL$ constructed in \S\ref{SECTorbiResol}. Observe that in these formulas $\omega$ is not required to be symplectic and $\eta$ does not need to be defined everywhere. Namely we note that 
\begin{itemize}
 \item[(i)] equation \eqref{volumeformulaIndex} holds if and only if $Y_0=N\times \R^+$ and $$r^2\in C^{\infty}(Y_0),\; \omega = \frac{1}{2}d(r^2\eta) \in\Omega^2(Y_0), \; \eta \mbox{ is invariant by dilatation};$$
 \item[(ii)] equation \eqref{DHformulaOrbi} holds if and only if $\omega$ is closed and $\omega(X,\cdot) = dH$;
 \item[(iii)] \eqref{DHformulaOrbi} coincides with \eqref{volumeformulaIndex} (for $a=1$) if and only $-r^2/2=-H$.
\end{itemize}
 
Observe that the restriction of the function $z\ra |z|$ on $\nu^{-1}(1/2)\subset Y_0\times \bbc$ (see \S\ref{SECTorbiResol}) is well-defined on the quotient  $\mL$ and that $|z|^2$ is smooth. Moreover, $|z|=f^*r$ where the map $f$ is defined by \eqref{biholomLY} and $r$ is the projection on the second factor of $Y_0$, see Convention~\ref{conventionCONE}. 

The vector field $\xi=X_b$, as a vector field on $Y_0$, commutes with the action of $\xi_o$, thus of $\mA$, and descends as a well-defined vector field, still denoted $\xi=X_{b}$, on $\mL$. The contact form $\eta_\xi$ on $N$, associated to $\xi$, is not basic with respect to $\xi_o$ and thus, not well-defined on $\mL$ a priori. However, we can {\it define} $\tilde{\eta}$ on $\mL\backslash W$ by imposing $\tilde{\eta}(\xi) =1$, $d\tilde{\eta}(\xi,\cdot) =0$ and that $\tilde{\eta}$ coincides with $f^*\eta$ on $r^{-1}(1)$. It does not define $\tilde{\eta}$ uniquely but we only need these conditions to make our argument work.  
Note that $\xi$ commutes with the vector field $r\frac{\del}{\del r}$ induced by the dilatation on the second factor of $N\times \bbc$. Hence, $\tilde{\eta}$ is invariant by the dilatation, that is homogenous of degree $0$ with respect to  $r\frac{\del}{\del r}$. Then, using observation (i) above, \eqref{volumeformulaIndex} holds on $Y_0\simeq\mL\backslash W$ for $\omega := \frac{1}{2}d(r^2 \tilde{\eta})$. 

Now we can forget $\tilde{\eta}$ and work with the rhs of \eqref{volumeformulaIndex} for $a=1$. Indeed, the $1$--form $\tilde{\eta}$ cannot be extended continously on $\mL$ but $r^2\tilde{\eta}$ can and then $\omega$ is a smooth closed $2$--form on $\mL$. So, the integrand of the rhs of \eqref{volumeformulaIndex} makes sense on $\mL$ and then, because $W$ has measure zero, we have   
\begin{equation}\label{volumeformulaIndexL}
\int_N \frac{\eta\wedge(d\eta)^n}{n!} = \frac{1}{n!} \int_{\mL} e^{-r^2/2} \frac{\omega^{n+1}}{(n+1)!}. 
\end{equation} Here, we identify $N$ with $r^{-1}(1) \subset \mL$. 

We can use \eqref{DHformulaOrbi} with $\omega$, $X=-\xi$ and $H=r^2/2$ which are defined on the whole $\mL$. The equation $\omega(X,\cdot)= dr^2 /2$ holds on $W$, because both sides vanish, and on $\mL\backslash W$, thanks to the fact that $\tilde{\eta}(X) = -1$, $d\tilde{\eta}(X,\cdot) =0$ and $\omega = \frac{1}{2}d(r^2 \tilde{\eta})$. Hence, using observations (ii) and (iii) above we get that \eqref{DHformulaOrbi} coincides with \eqref{volumeformulaIndexL}. That is  
\begin{equation}\label{DHformulaOrbiL}\begin{split}
 \int_{\mL} e^{-r^2/2}\frac{\omega^{n+1}}{(n+1)!} &=  \sum_Z \frac{1}{d_Z}\int_Z \prod_j\frac{e^{\iota_Z^*\omega}}{c_1(E^Z_j) + \kappa_j^Z/2\pi}\\
 &= \sum_Z \frac{1}{d_Z}\int_Z \prod_j\frac{e^{\iota_Z^*\omega}}{c_1(E^Z_j) - \langle \underline{\kappa}_j^Z, b\rangle/2\pi } 
 \end{split}
\end{equation} where the last line comes from Remark~\ref{remMSYtrick} to highlight the dependance on $b\in \kt^+$.

In the formula \eqref{DHformulaOrbiL}, the fixed point set $\bfZ=\cup Z$ of $X$ lies in $W$ on which $\omega$ vanishes identically. Thus, in the integrand of the rhs of \eqref{DHformulaOrbiL}, only the first term of the power series in $\iota_Z^*\omega$ does not vanish a priori. In conclusion, we have 
\begin{equation}\label{formulaDHrationalVolL}
\bfV_{X_b} =  \frac{1}{n!} \sum_Z \frac{(2\pi)^{n-m_Z}}{d_Z}\int_Z \prod_{j=0}^{n-m_Z}\frac{1}{2\pi c_1(E^Z_j) - \langle \underline{\kappa}_j^Z, b\rangle} 
\end{equation} which can be written as a rational function of $\langle \underline{\kappa}_j^Z, b\rangle$'s using the relation $(1-x)^{-1} = \sum_{s\geq 0} x^s$. Since the integration over $Z$ picks up only the term of degree $2m_Z$, the factor $(2\pi)^{m_Z}$ goes out and we get a rational function of order $-(n+1)$ up to the over all factor $(2\pi)^{n+1}$ 
\begin{equation}\label{formulaDHrationalVolL}
\bfV_{X_b} =  \frac{(2\pi)^{n+1}}{n!} \sum_Z \frac{1}{d_Z}\int_Z \prod_{j=0}^{n-m_Z} \frac{1}{\langle \underline{\kappa}_j^Z, b\rangle} \left(  \sum_{s\geq 0} \left(\frac{c_1(E^Z_j)}{\langle \underline{\kappa}_j^Z, b\rangle}\right)^s\right).
\end{equation}
This concludes the proof of Theorem \ref{theoVol=rat} \hfill $\Box$

\begin{rem} If we start with a generic $b\in\kt^+$, \eqref{formulaDHrationalVolL} holds for the dense open subset of $\kt^+$ of elements sharing the same fixed points set $\{Z\}\subset W$ which coincides with the fixed points of $T/S_{\xi_o}^1$, the maximal torus acting on $W$. For example, in the toric case starting with a generic $b\in\kt^+$, then $\{Z\}$ is the fixed points set of the torus and so consists of a finite number of isolated points. In this case, formula \eqref{formulaDHrationalVolL} gives back the formulas in ~\cite{reebMETRIC, nonUNIQcscS} as expected.  
\end{rem}

 \begin{corollary} \label{coroVboundary}When $b\in \kt^+$ tends to the boundary of $\kt^+$, $\bfV_{X_b}$ tends to $+\infty$.  
 \end{corollary}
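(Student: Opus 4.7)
My plan is to give an analytic proof based on a simple integral representation of $\bfV_{X_b}$. The defining map \eqref{defnSasakiCONE} assigns to $X_b\in\kt^+$ the contact form $\eta_b=\eta_o/f$ with $f:=\eta_o(X_b)>0$. A routine computation using $\eta_o\wedge\eta_o=0$ gives $\eta_b\wedge(d\eta_b)^n = f^{-(n+1)}\,\eta_o\wedge(d\eta_o)^n$, hence
\begin{equation*}
\bfV_{X_b} \;=\; \frac{1}{n!}\int_N f^{-(n+1)}\,\eta_o\wedge(d\eta_o)^n,\qquad f=\eta_o(X_b).
\end{equation*}
Since $f=2\langle\mu_{\eta_o},b\rangle$ depends linearly on $b$, for any sequence $b_k\to b_0\in\partial\kt^+$ inside $\kt^+$ the functions $f_{b_k}$ converge uniformly to $f_{b_0}\geq 0$, and by the very definition of $\kt^+$ (which is cut out by the strict positivity of $\eta_o(X_\cdot)$), the limit $f_{b_0}$ has a nonempty zero set $K\subset N$.

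The next step is to identify $K$ geometrically. Being $T$-invariant, $f_{b_0}$ descends to $W$ as an affine function of the $T/S^1_o$-moment map (Remark~\ref{rem:affine=t*}), and its zero set in $W$ is the face of $\Delta_{b_o}$ on which $b_0$ attains its minimum. This is a union of fixed components $Z$ of the $T/S^1_o$-action characterised by $\langle\underline{\kappa}_0^Z,b_0\rangle=0$ (cf.\ \S\ref{secWeights}), and hence $K=\pi^{-1}(\bigcup Z)$ is a smooth compact submanifold of $N$ of positive codimension $k\leq 2n+1$.

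The main obstacle, and the step where I would spend most of my care, is a quadratic lower bound $f_{b_0}\geq c\,|u|^2$ in a tubular neighbourhood of $K$, with $u$ a normal coordinate and $c>0$. This follows from the Morse--Bott character of $f_{b_0}$: on $W$, standard Hamiltonian arguments give that the Hessian of a moment-map component in the normal directions to its minimum locus is positive definite; pulling back to $N$ only adds one degenerate direction along $\xi_o$, which is tangent to $K$. Granting the bound, the integral $\int_{|u|<r_0}|u|^{-2(n+1)}\,du$ on $\bR^k$ diverges at the origin since $2(n+1)>2n+1\geq k$, so $\int_N f_{b_0}^{-(n+1)}\,\eta_o\wedge(d\eta_o)^n=+\infty$. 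Finally, Fatou's lemma applied to the pointwise-convergent, non-negative sequence $f_{b_k}^{-(n+1)}$ (with the convention $1/0=+\infty$ on $K$) yields
\begin{equation*}
\liminf_{k\to\infty}\bfV_{X_{b_k}} \;\geq\; \frac{1}{n!}\int_N f_{b_0}^{-(n+1)}\,\eta_o\wedge(d\eta_o)^n \;=\; +\infty,
\end{equation*}
proving the claim.
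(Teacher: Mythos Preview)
Your approach is genuinely different from the paper's and, once a slip is corrected, it works. The paper argues via the localisation formula \eqref{formulaDHrationalVolL}: it identifies the unique connected minimum component $Z$ of $\eta_o(X_b)$ using the Morse--Bott property of moment-map components, checks that all the weights $\langle\underline{\kappa}_j^Z,b\rangle$ at $Z$ are positive, and then reads off that the leading term \eqref{leadVol} is positive and blows up as $\langle\underline{\kappa}_0^Z,b\rangle=\eta_o(X_b)_p\to 0$. That argument exploits the machinery already built in the paper and, importantly, yields an explicit leading term that is reused in Lemma~\ref{bfHinfty} to control $\bfH$ near the boundary. Your argument bypasses the localisation formula entirely, working directly with the integral $\bfV_{X_b}=\frac{1}{n!}\int_N\eta_o(X_b)^{-(n+1)}\,\eta_o\wedge(d\eta_o)^n$ and Fatou's lemma; it is more elementary and self-contained, but does not produce that quantitative leading term.

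There is, however, a slip in the direction of your inequality. You write $f_{b_0}\geq c\,|u|^2$, but this only gives $f_{b_0}^{-(n+1)}\leq c^{-(n+1)}|u|^{-2(n+1)}$, an upper bound that does not force divergence. What you need is the \emph{upper} bound $f_{b_0}\leq C\,|u|^2$ near $K$, which follows immediately from Taylor's theorem (both $f_{b_0}$ and its gradient vanish on the minimum set) with no appeal to Morse--Bott. In fact you need neither Morse--Bott nor any structural description of $K$: pick a single zero $p\in N$ of $f_{b_0}$; then $f_{b_0}\leq C\,\mathrm{dist}(\cdot,p)^2$ near $p$, hence $f_{b_0}^{-(n+1)}\geq C'\,\mathrm{dist}(\cdot,p)^{-2(n+1)}$, and the integral of the latter over a ball in a $(2n+1)$-manifold already diverges. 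A second minor imprecision: the zero set of $f_{b_0}$ on $W$ is the preimage of a face of $\Delta_{b_o}$, hence the fixed locus of the one-parameter subgroup generated by $b_0$, not in general a union of fixed components of the full $T/S^1_o$-action; but this plays no role once the inequality is corrected.
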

 \begin{proof} When $b\in \kt^+$ tends to the boundary of $\kt^+$ (but away from $0\in \del \kt^+$), there is at least one point $p\in N$ such that $\eta_o(X_b)_p \ra 0$ by definition of $\kt^+$, see \eqref{eq:SasakiCone}. Therefore, $p$ is a minimum of the function $\eta_o(X_b) =\langle\mu_{\eta_o},b \rangle$ and $\mu_{\eta_o}(p)$ lies in the boundary of the polytope $\Delta_o=\mu_{\eta_o}(N)$ which, up to translation is the moment polytope of $(W,\sigma,T/S^{1}_o)$. 
 
 Recall from \S\ref{subsectTautomo}, that the function $\eta_o(X_b)$ is the pullback to $N$ of an affine-linear function on $\Delta_o$ so that it is a Morse-Bott function on $N$ of even index. By a well known argument see~\cite{GS}, $\eta_o(X_b)$ has a unique minimum in the sense that there is a unique connected component $\tilde{Z}\subset \mbox{crit}(\eta_o(X_b)) \subset N$ such that $\tilde{Z}$ is a local minimum, thus it is a global minimum and there is no other part of $N$ on which $\eta_o(X_b)$ tends to $0$ faster than it does on $\tilde{Z}$.  
 
Moreover, near $Z= \pi(\tilde{Z})\subset W$ and seen as a function on $W$ it can be locally written as $\eta_o(X_b) =_{loc} \sum_{i=1}^{n-m_Z} |v_i|^2 \langle \underline{\kappa}_i^Z , b\rangle + c_b$ where $c_b$ is a constant and $v_i \in E^Z_i$ in the decomposition \eqref{decomptangentT} corresponding to the action of $T/S^1_o$ endowed with an invariant and compatible metric $|\cdot|$, see e.g.~\cite{L:contactToric}. Since $p$ is a minimum we have $$\langle \underline{\kappa}_i^Z , b\rangle >0$$ for $i=1,\dots , n-m_Z$. Therefore, combining this with the observations of \S\ref{secWeights}, in particular that $c_1(E_0^Z) = \iota_Z^*c_1(\mL)= \iota_Z^*[\sigma/2\pi] >0$ and $\langle \underline{\kappa}_0^Z , b\rangle = \eta(X_b)_p$ we get that the leading term of $\bfV_{X_b}$, which is 
 \begin{equation}\label{leadVol}
\frac{(2\pi)^{n+1}}{n!d_Z} \left(\prod_{j=0}^{n-m_Z} \frac{1}{\langle \underline{\kappa}_j^Z, b\rangle} \right)\int_Z\left(\frac{c_1(E^Z_0)}{\langle \underline{\kappa}_0^Z, b\rangle}\right)^{m_Z},
\end{equation} is positive and tends to $+\infty$ when $\langle \underline{\kappa}_0^Z , b\rangle=\eta_o(X_b)_p \ra 0$.\end{proof}

\subsection{The total transversal scalar curvature functional}\label{sectTTSCf}

 \subsubsection{Variation of the transverse scalar curvature}

The goal is to understand better how the function $\xi \mapsto \bfS_{\xi}$ varies with $\xi$ when $\Ds$ is a fixed (co-oriented) contact structure with a compatible CR structure $(\Ds,J)$ on $N$, see~\eqref{defnSasakiCONE}.

Given a Sasaki structure $(N,\Ds,J,\xi,\eta_\xi)$ on $N$, we recall from~\cite{FutakiOnoWang} that   
\begin{equation}\label{eq:DefnbfSrecallLOC}
s^T \frac{\eta_\xi \wedge (d\eta_\xi)^{n}}{n!} = 2\frac{\rho_\xi^T \wedge\eta_\xi\wedge  (d\eta_\xi)^{n-1}}{(n-1)!}. 
\end{equation} and $\rho_\xi^T$ is defined on each patch $U_\alpha \subset N$ as the pull back of the Ricci form $\rho_{\xi,\alpha}$ of the K\"ahler structure $(g_{\xi,\alpha},\omega_{\xi,\alpha},I)$ induced on $V_\alpha$ (see the notation from the second paragraph of~\S\ref{subsectransverse}, taken from~\cite{FutakiOnoWang}, here we add the indice $\xi$ to emphasize the dependance in the structure).  

Consider another Sasaki structure $(N,\Ds,J,\xi_o,\eta_o)$ compatible with the same CR structure $(\Ds,J)$ and $\xi_o\in\kt^+$. The associated transversal Ricci form $\rho_{o}^T \in \Gamma(N,\bigwedge^2 \Ds)$ is not basic with respect to $\xi$. Indeed $\mL_\xi\rho_{o}^T\equiv 0$ by $T$--invariance of the Sasaki structure $(N,\Ds,J,\xi_o,\eta_o)$, but there is no reason for $\rho_{o}^T(\xi,\cdot)$ to vanish identically. Actually, $\xi_o$ commutes with $\xi$ by hypothesis and descends as a Killing vector field on $(V_\alpha,g_{\xi,\alpha},\omega_{\xi,\alpha},I)$. Therefore by the well-known formula, see~\cite[Lemma 1.23.4]{PGnote}, we have that 
\begin{equation}\label{eq:rho_notbasic}
 \rho_{\xi,\alpha}(\xi_o,\cdot) = -\frac{1}{2} d \Delta^{g_{\xi,\alpha}} \langle \check{\mu}_\xi,[b_o]\rangle
\end{equation} where $\check{\mu}_\xi : V_\alpha \ra (\kt/\bR b)^*$ is the momentum map and $b_o$ is such that $\xi_o=X_{b_o}$ (see~\S\ref{subsectTautomo}).

Up to a constant, the pull back by $\pi_{\xi,\alpha}$ of the function $\langle \check{\mu}_\xi,[b_o]\rangle$ coincides with $ \langle \mu_\xi,b_o\rangle= \frac{1}{2}\eta_\xi(\xi_o)$ which is globally defined on $N$. For any function $f\in C^2(V_\alpha)$ we have $\pi_{\xi,\alpha}^* \Delta^{g_{\xi,\alpha}} f = \Delta^{g_{\xi}} \pi_{\xi,\alpha}^*f$ because $\pi$ is a Riemannian fibration with totally geodesic fibers~\cite{watson}. Therefore, on $N$, we have the global formula
\begin{equation*}
 \rho_{\xi}^T(\xi_o,\cdot) = -\frac{1}{2} d\Delta^{g_{\xi}} \langle \mu_\xi,b_o\rangle, 
\end{equation*} or similarly
\begin{equation}\label{eq:rho_onotbasic}
 \rho_{o}^T(\xi,\cdot) = -\frac{1}{2} d\Delta^{g_{o}} \langle \mu_{\xi_o},b\rangle. 
\end{equation} In particular, putting $f_{o,\xi}:=\frac{1}{2} \Delta^{g_{o}} \langle \mu_{\xi_o},b\rangle$  we get that the $2$--form $$\beta =  \rho_{o}^T +\eta_\xi\wedge df_{o,\xi} - f_{o,\xi}d\eta_\xi = \rho_{o}^T - d(f_{o,\xi}\eta_\xi)$$ is closed and $\xi$--basic (i.e basic with respect to $\xi$). So the two closed $2$--forms $\rho_\xi^T$ and $\beta$ lie in the same de Rham cohomology class, namely $c_1(\Ds)$, and are both $\xi$--basic and $J$--invariant. Therefore, there exists a $\xi$--basic function $f\in C^\infty_B(N)$ such that    
\begin{equation}\label{eq:beta} \rho_{\xi}^T -\beta = d_Bd_B^c f. \end{equation}

Note that any basic function $f\in C_B^2(N)$ defines a function, still denoted $f$, on each chart $V_\alpha$ and, applying a classical K\"ahler formula on $(V_\alpha, g_{\xi,\alpha},\omega_{\xi,\alpha},I)$ (see for e.g~\cite{PGnote}), we have 
$$\frac{dd^c f \wedge (\omega_\xi^T)^{n-1}}{(n-1)!} = -(\Delta^{g_{\xi,\alpha}} f) \, \frac{(\omega_\xi^T)^{n}}{n!}$$ where $\Delta^{g_{\xi,\alpha}}$ is the Laplacian associated to the metric $g_{\xi,\alpha}$. Therefore, we have for any basic function  $f\in C_B^2(N)$ $$\frac{d_Bd_B^c f \wedge\eta_\xi \wedge (d\eta_\xi)^{n-1}}{(n-1)!} = -(\Delta^{g_{\xi}}_B f) \, \frac{\eta_\xi \wedge (d\eta_\xi)^{n}}{n!}$$ where $\Delta^{g_{\xi}}_B f = d_B^*d_B f$ is the basic Laplacian.

Putting this, \eqref{eq:beta} and~\eqref{eq:DefnbfSrecallLOC} in \eqref{TotTransSCAL} and using the fact that $\eta_\xi\wedge \eta_\xi\equiv 0$, we get 

\begin{equation}\label{eqSCALbeta}\begin{split}
\bfS_{\xi} &=2\int_N \frac{\rho_\xi^T \wedge\eta_\xi\wedge  (d\eta_\xi)^{n-1}}{(n-1)!}\\
&= 2\int_N \frac{\beta \wedge\eta_\xi\wedge  (d\eta_\xi)^{n-1}}{(n-1)!} + \int_N\frac{(d_B d_B^c f)\wedge  \eta_\xi \wedge (d\eta_\xi)^{n-1}}{(n-1)!}.\\
&= 2\int_N \frac{\beta \wedge\eta_\xi\wedge  (d\eta_\xi)^{n-1}}{(n-1)!} - \int_N(\Delta^{g_{\xi}}_B f) \frac{\eta_\xi \wedge (d\eta_\xi)^{n}}{n!}.\\
&= 2\int_N \frac{\beta \wedge\eta_\xi\wedge  (d\eta_\xi)^{n-1}}{(n-1)!}\\
&= 2\int_N \frac{(\rho_o^T - d(f_{o,\xi}\eta_\xi)) \wedge\eta_\xi\wedge  (d\eta_\xi)^{n-1}}{(n-1)!}\\
&= 2\int_N \frac{\rho_o^T \wedge\eta_\xi\wedge  (d\eta_\xi)^{n-1}}{(n-1)!} - 2\int_N f_{o,\xi} \frac{\eta_\xi\wedge  (d\eta_\xi)^{n}}{(n-1)!}
\end{split} 
\end{equation} Note that $f_{o,\xi}=\frac{1}{2} \Delta^{g_{o}} \langle \mu_{\xi_o},b\rangle =\frac{1}{4} \Delta^{g_{o}} \eta_o(\xi)$ is linear with respect to $\xi\in \kt^+$. Putting $\vol_o=\frac{\eta_o \wedge (d\eta_o)^{n}}{n!}$, another way to write \eqref{eqSCALbeta} is 

\begin{equation}\label{eqSCALT} \begin{split}
\bfS_{\xi} &=  \int_N \frac{s_o^T}{\eta_o(\xi)^n} \vol_o - 2\int_N \frac{f_{o,\xi}}{\eta_o(\xi)^{n+1}} \vol_o\\
&=  \int_N \frac{s_o^T}{\eta_o(\xi)^n} \vol_o + \frac{n+1}{2}\int_N \frac{|d\eta_o(\xi)|^2_{g_o}}{\eta_o(\xi)^{n+2}} \vol_o
\end{split}
\end{equation} 
\begin{corollary} Within a Sasaki cone there is at most one ray of vanishing transverse scalar curvature. Moreover, if there is one Sasaki structure with non negative transverse scalar curvature then the total transverse scalar curvature is non-negative on the whole Sasaki cone. 
\end{corollary}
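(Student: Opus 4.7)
The plan is to extract both assertions directly from the identity~\eqref{eqSCALT} just derived. Recall that for any reference Sasaki structure $(\xi_o,\eta_o,g_o)\in\kt^+$ and any other $\xi\in\kt^+$,
\[
\bfS_{\xi} = \int_N \frac{s_o^T}{\eta_o(\xi)^n}\vol_o + \frac{n+1}{2}\int_N \frac{|d\eta_o(\xi)|^2_{g_o}}{\eta_o(\xi)^{n+2}}\vol_o,
\]
where the second integrand is non-negative and the first has the pointwise sign of $s_o^T$, weighted by $\eta_o(\xi)^{-n}>0$ since $\xi$ lies in the Sasaki cone.

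For the non-negativity claim I would simply take as reference the Sasaki structure furnished by the hypothesis, so that $s_o^T\geq 0$ pointwise on $N$. Then both integrands in \eqref{eqSCALT} are non-negative for every $\xi\in\kt^+$, and hence $\bfS_\xi\geq 0$ throughout the cone.

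For the uniqueness of a scalar-flat ray I would take $\xi_o$ with $s_o^T\equiv 0$ and let $\xi_1\in\kt^+$ be any other structure satisfying $s^T_{\xi_1}\equiv 0$. Inserting $\xi=\xi_1$ in \eqref{eqSCALT} makes the first integral vanish, while $\bfS_{\xi_1}=\int_N s^T_{\xi_1}\vol_{\xi_1}=0$; hence the non-negative integrand $|d\eta_o(\xi_1)|^2_{g_o}/\eta_o(\xi_1)^{n+2}$ vanishes identically, which is equivalent to $\xi_1\intprod d\eta_o\equiv 0$ on $N$. Since $\xi_o$ is the Reeb field of $\eta_o$, the kernel of $d\eta_o$ is the line $\bR\xi_o$ pointwise, so $\xi_1=h\,\xi_o$ for some smooth function $h$.

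The delicate step, which I expect to be the only real obstacle, is to upgrade this pointwise proportionality to an equality of rays in $\kt$. For this I would use that $\xi_1\in\kt$ acts by CR automorphisms and commutes with $\xi_o$, so it preserves $\eta_o$, i.e.\ $\mathcal L_{\xi_1}\eta_o=0$; Cartan's formula combined with $\xi_1\intprod d\eta_o=0$ then yields $d(\eta_o(\xi_1))=0$, so $h=\eta_o(\xi_1)$ is constant on $N$. By injectivity of the effective torus action $\kt\hookrightarrow\Gamma(TN)$, constancy of $h$ gives $b_1=h\,b_o$ in $\kt$, so $\xi_1$ and $\xi_o$ generate the same ray, completing the argument.
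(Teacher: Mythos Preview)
Your proof is correct and is precisely the argument the paper has in mind: the corollary is stated immediately after \eqref{eqSCALT} without further justification, and your proposal simply fills in the details that the authors leave to the reader. The only part the paper does not make explicit is your ``delicate step'' showing $h$ is constant; your use of the $T$-invariance of $\eta_o$ together with Cartan's formula is the natural way to close this, and it is justified by the standing assumption in \S\ref{subsectTautomo} that the Sasaki structures under consideration are $T$-invariant.
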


\subsubsection{A localization formula for the total transverse scalar curvature}
Using polar coordinates on $Y_0= N\times \bR_{>0}$, Fubini's Theorem and integration by parts, we have for any $\eta\in \Omega^1(N)$, $\phi\in \Omega^2(N)$ and $a>0$ 
\begin{equation}\label{eqTotSonY}
\int_N \frac{\eta\wedge \phi \wedge (d\eta)^{n-1}}{(n-1)!} = \frac{a^{n} }{(n-1)!}\int_{Y_0} e^{- a r^2/2} \frac{\phi \wedge \omega^{n}}{n!}  
\end{equation} where $\omega =\frac{1}{2} d r^2 \eta$. 

Given a Sasakian  manifold $(N,\Ds,J,g,\xi,\eta)$ of dimension $2n+1$ as above, we use the notation of \S\ref{sectAppSasVol} and \S~\ref{sectBackground}. We consider the resolution $f:\mL\dashrightarrow Y$ of section \ref{SECTorbiResol} associated with a quasi-regular Reeb vector field $\xi_o \in\underline{\kt}^+$, inducing the isometric action of a circle $S^1_{\xi_o}\subset \underline{T}$ whose quotient K\"ahler orbifold is $(W,\sigma_o,\check{J}_o,\check{g}_o)$. Recall that $\mL$ is obtained as the quotient of $\nu^{-1}(1/2)=N\times \C$ by the action of $S^1$ induced by $\xi_o -\del_\theta$. In particular, a $\xi_o$ basic $k$--form on $N$ descends as a well defined $k$--form on $\mL$, for example $\rho_o^T$ and $f_{o,\xi}$ are both defined on the total space of $\mL$. Then, we can use the observations~\eqref{eqTotSonY} and~\eqref{volumeformulaIndex} to modify respectively the first and the second term of the last line of~\eqref{eqSCALbeta} as follow  
\begin{equation}\label{eqSCALY}\begin{split}
\frac{(n-1)!}{2}\bfS_{\xi} &= \int_N \rho_o^T \wedge\eta_\xi\wedge  (d\eta_\xi)^{n-1} - \int_N f_{o,\xi} \eta_\xi\wedge  (d\eta_\xi)^{n}\\
&=  \int_{Y_0} e^{-r^2/2} \frac{\rho_o^T \wedge \omega^{n}}{n!} -  \int_{Y_0} f_{o,\xi} e^{-r^2/2} \frac{\omega^{n+1}}{(n+1)!} \\
&= \int_{Y} e^{-r^2/2} (\rho_o^T-f_{o,\xi}) e^{\omega}\\
&= \int_{Y}  (\rho_o^T-f_{o,\xi}) e^{\omega-r^2/2}
\end{split} 
\end{equation} where $\omega =\frac{1}{2} d r^2 \eta_\xi$. 

Using Theorem~\ref{extDHteo} in the orbifold case (see Remark \ref{remOrbi}) with $X=-\xi$, $H=r^2/2$, we get 
\begin{equation}\label{eqSCALsumZ}\begin{split}
\frac{(n-1)!}{2}\bfS_{\xi} &= \sum_Z \frac{1}{d_Z}\int_Z \frac{(\iota_Z^*\rho^T_o - f_{o,\xi}(Z))e^{\iota_Z^*\omega}}{\prod_{j=0}^{n-m_Z} ( c_1(E^Z_j) +  \kappa^Z_j/2\pi)} \\
&= \sum_Z \frac{1}{d_Z}\int_Z \frac{(\iota_Z^*\rho^T_o- f_{o,\xi}(Z))e^{\iota_Z^*\omega}}{\prod_{j=0}^{n-m_Z}(c_1(E^Z_j) - \langle   
\underline{\kappa}_j^Z,b\rangle/2\pi )}. 
\end{split} 
\end{equation} Again, the fixed points set $\bfZ=\cup Z$ of $X$ lies in $W$ on which $\omega$ vanishes identically. Thus, in the integrand of the rhs of \eqref{eqSCALsumZ}, only the first term of the power series in $\iota_Z^*\omega$ does not vanish a priori. In conclusion, we have 
\begin{equation}\label{eqSCALsumZ0}\begin{split}
\bfS_{\xi} &= \frac{2}{(n-1)!}\sum_Z \frac{1}{d_Z}\int_Z \frac{(\iota_Z^*\rho^T_o- f_{o,\xi}(Z))}{\prod_{j=0}^{n-m_Z} ( c_1(E^Z_j) -  \langle \underline{\kappa}_j^Z,b\rangle/2\pi )}\\
 &= \frac{2 (2\pi)^{n+1}}{(n-1)!} \sum_Z \frac{1}{d_Z} \left[  \int_Z (\iota_Z^*\rho^T_o/2\pi)\wedge \left(\prod_{j=0}^{n-m_Z} \frac{1}{\langle \underline{\kappa}_j^Z, b\rangle}\sum_{s\geq 0} \left(\frac{c_1(E^Z_j)}{\langle \underline{\kappa}_j^Z, b\rangle}\right)^s\right) \right. \\
 &\qquad \qquad \qquad\qquad\qquad \left. -f_{o,\xi}(Z)\int_Z \prod_{j=0}^{n-m_Z} \frac{1}{\langle \underline{\kappa}_j^Z, b\rangle}\sum_{s\geq 0} \left(\frac{c_1(E^Z_j)}{\langle \underline{\kappa}_j^Z, b\rangle}\right)^s\right]
\end{split} 
\end{equation} 

\begin{rem} Whenever the fixed points set $\bfZ$ of $X$ consists in a set of isolated points $\bfZ=\{p\}$, formula~\eqref{eqSCALsumZ0} simplifies as  
\begin{equation}\label{eqSCALsum_p}
\bfS_{\xi} = \frac{2(2\pi)^{n+1}}{(n-1)!}\sum_p \frac{1}{d_p}  \frac{-f_{o,\xi}(p)}{\prod_{j=1}^{n+1}\langle \underline{\kappa}_j^p,b\rangle }.  
\end{equation}  
\end{rem}

\subsubsection{Proof of Theorem~\ref{theoSCAL=rat}} \label{subsecProoftheoSCAL=rat}

Using \eqref{eqSCALsumZ0} and the facts that $\frac{\iota_Z^*\rho^T_o}{2\pi},c_1(E^Z_j)  \in H^2_{dR}(Z,\bQ)$ and $\lambda_j^Z\in \Lambda^*$ (see Remark~\ref{remMSYtrick}), to prove Theorem~\ref{theoSCAL=rat}, it remains to show that $f_{o,\xi}(Z) \in \bQ$.

\begin{lemma}\label{lemFIXlap} Let $(M^n,\omega, J,g)$ be a K\"ahler orbifold endowed with an effective Hamiltonian isometric action of a torus $T^m$ and a momentum map $\mu : M\ra \kt^*$ (where $\kt:=\mbox{Lie}(T)$). For any $Z$, a connected component of $\mbox{Fix}_M T$ with $\dim Z =2m_Z$, denoting the equivariant decomposition $T_zM = T_zZ \oplus_{j=1}^{n-m_Z} E_j^Z$ with weights  $\underline{\kappa}^Z_1,\dots,  \underline{\kappa}^Z_{n-m_Z}$ we have $$(\Delta^{g} \mu)_z =-2\sum_{j=1}^{n-m_Z} \underline{\kappa}^Z_j$$ at any $z\in Z$.
\end{lemma}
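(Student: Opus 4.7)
The plan is to reduce the identity to a pointwise computation in an equivariant chart centered at $z$. First I would pair with an arbitrary $a \in \kt$, so the statement becomes the scalar identity $(\Delta^g \mu^a)(z) = -2\sum_{j=1}^{n-m_Z}\langle \underline{\kappa}_j^Z, a\rangle$, where $\mu^a := \langle \mu, a\rangle$ is the Hamiltonian of $X_a$. Both sides are linear in $a$, so it suffices to establish this for each fixed $a$ separately.

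Next I would localize at $z$. Since $T$ acts isometrically and holomorphically on $M$ with $z$ a fixed point, the equivariant Marle--Guillemin--Sternberg normal form (applied in a local uniformizing chart to handle the orbifold structure, after averaging over $T$ to make all data $T$-invariant) yields complex coordinates $(w_1,\dots,w_n)$ centered at $z$ in which the $T$-action is the linearized weight-space action on $T_zM = T_zZ \oplus \bigoplus_{j=1}^{n-m_Z} E_j^Z$ -- rotating the complex line $E_j^Z$ with weight $\underline{\kappa}_j^Z$ and fixing $T_zZ$ -- the symplectic form agrees with the standard $\omega_0$ to first order at the origin, and the K\"ahler metric equals the Euclidean metric with vanishing Christoffel symbols at the origin (K\"ahler normal coordinates).

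From the defining relation $\iota_{X_a}\omega = d\mu^a$ (the convention of \eqref{hamUNUSUALconv}) and the explicit form of the linearized $X_a$, a direct calculation with $\omega_0 = \sum_k dx_k\wedge dy_k$ gives
\[\iota_{X_a}\omega_0 \;=\; -\tfrac{1}{2}\, d\!\left(\sum_{j=1}^{n-m_Z}\langle\underline{\kappa}_j^Z, a\rangle\,|w_j|^2\right)\]
to first order at $z$. Since $X_a$ vanishes along $Z$ (so $\mu^a$ is constant on $Z$), the tangential coordinates contribute nothing, and integration yields
\[\mu^a(w) \;=\; \mu^a(z) \;-\; \tfrac{1}{2}\sum_{j=1}^{n-m_Z}\langle\underline{\kappa}_j^Z, a\rangle\,|w_j|^2 \;+\; O(|w|^3).\]

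At $z$ the metric is Euclidean and the Christoffel symbols vanish, so $(\Delta^g \mu^a)(z)$ coincides with the flat Laplacian applied to the quadratic Taylor term. Using $\Delta^{\mathrm{eucl}}|w_j|^2 = 4$ (with the Laplacian sign convention used throughout the paper) produces exactly $-2\sum_{j=1}^{n-m_Z}\langle \underline{\kappa}_j^Z, a\rangle$, which is the claim. The only real obstacle is bookkeeping the sign conventions linking $\omega$, $\mu$, the Laplacian and the weights; once these are aligned the argument is immediate, and the orbifold case needs no new idea because the entire computation takes place in an equivariant uniformizing chart at $z$.
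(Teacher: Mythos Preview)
Your coordinate-based approach is valid and reaches the same conclusion as the paper, but by a different route. The paper works intrinsically: it shows $\mathrm{Hess}^g\mu_b(X,Y)=-\omega(\nabla_X X_b,Y)$ at $z$, then identifies $\nabla_v X_b$ with the linearized action $d\phi_b(v)$ by splitting $dX_b(v)$ into horizontal and vertical parts in $T(TM)$, and finally reads off the weights from $\omega(d\phi_b(w_i),w_i)$. You instead pass to equivariant normal coordinates and Taylor-expand $\mu^a$ to second order. Both arguments rest on the same fact---the Hessian of the moment map at a fixed point is governed by the weight decomposition of the isotropy representation---so the difference is one of packaging. Your version is more hands-on; note, though, that the full Marle--Guillemin--Sternberg normal form is more than you need: $T$-equivariant geodesic normal coordinates (which exist because $\exp_z$ is $T$-equivariant for isometric actions) already give a linear action together with $g(z)=g_0$, $\Gamma(z)=0$ and $\omega(z)=\omega_0$, and that is all your computation uses.

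There is one concrete bookkeeping issue. You invoke the convention of~\eqref{hamUNUSUALconv}, $\iota_{X_a}\omega=d\mu^a$, but the lemma lives in the Sasakian context of \S\ref{subsectTautomo}, where the convention is the opposite one, $\iota_{X_a}\omega=-d\mu^a$ (this is visible in the paper's own line $d\mu_b(Y)=-\omega(X_b,Y)$). Separately, the paper's Laplacian is the positive one, $\Delta^g=-\mathrm{tr}\,\mathrm{Hess}^g$, so at $z$ one has $\Delta^g|w_j|^2=-4$, not $+4$. These two sign slips cancel, which is why your final answer agrees with the statement; but as written the intermediate steps are inconsistent. With the paper's conventions the expansion is $\mu^a(w)=\mu^a(z)+\tfrac12\sum_j\langle\underline{\kappa}_j^Z,a\rangle\,|w_j|^2+O(|w|^3)$, and then $\Delta^g\mu^a(z)=-\mathrm{tr}\,\mathrm{Hess}=-2\sum_j\langle\underline{\kappa}_j^Z,a\rangle$ as claimed.
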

\begin{proof}
We prove the lemma for $M$ a manifold. Since the result is local this assumption is not restrictive. 

Let first recall some consequences of the Symplectic Slice Theorem, see for eg.~\cite{Audin,LT}. We choose $z\in M$, a fixed point of $T$ and call $\phi : T\ra \mbox{Symp}(T_xM,\omega)$, the representation so that $\phi_{\theta}=d_z\theta$ where $\theta\in T$ is viewed as a diffeomorphism of $M$; we write the corresponding representation $\phi_i : T\ra \mbox{Symp}(E_i^Z, \omega_i)$. With this notation, $\underline{\kappa}^Z_i=d\phi_i\in\kt^*$ are the weights of the action $\phi$. Note that $W_0$ is the tangent space of the connected component $Z$ of the fixed points set of $T$.

Let $b\in \kt$ and put $\mu_b=\langle \mu , b\rangle$. The Laplacian $\Delta^{g}\mu_b$ is also $-\mbox{tr Hess}^{g} \mu_b$ where we have at $z$, for vector fields $X$, $Y$ defined near $z$: 
\begin{equation}
 \begin{split}
  \mbox{Hess}^{g} \mu_b (X,Y) &=\nabla d\mu_b (X,Y)\\
  &=  X (d\mu_b(Y))  \\
  &=- X(\omega(X_b,Y))\\
  &= -\omega(\nabla_X X_b, Y)
 \end{split}
\end{equation} by using twice the fact that $z$ is a critical point of $\mu_b$ and the K\"ahler condition $\nabla J=0$. Hence, since $X_b$ is a real holomorphic vector field, that is $\nabla_{JY}X_b=J\nabla_{Y}X_b$, and that $\mL_X\omega(X_b,Y) =0$ for any $X\in\Gamma(TZ)$, we obtain $$\Delta^{g}\mu_b =  \sum_{i=1}^{n}\omega(\nabla_{w_i} X_b, w_i)+\omega(\nabla_{J w_i} X_b, Jw_i)$$ where $\{w_i, Jw_i\}$ is a normal symplectic basis of $E_i$ for $i=1,\dots,n-m_Z$ and $\{w_i, Jw_i\}_{i=n-m+1}^n$ is a normal symplectic basis of $T_zZ$. Thanks to the fact that $X_b$ is real holomorphic, it reduces 
$$\Delta^{g}\mu_b =  2\sum_{i=1}^{n}\omega(\nabla_{w_i} X_b, w_i).$$
Consider the path $\theta_t= \mbox{exp }t b$ in the torus $T$, so $(d\phi) (b) =\frac{d}{dt}_{|_{t=0}}\phi_{\theta_t}\in \mathfrak{symp}(T_zM, \omega)$ and $v\in T_zM$ with the flow $\gamma_s\subset M$ (i.e $\frac{d}{ds}_{|_{s=0}} \gamma_s =v$).  With this notation 
\begin{equation}
 \begin{split} d\phi_b(v) &=\frac{d}{dt}_{|_{t=0}}\frac{d}{ds}_{|_{s=0}}(\theta_t\circ \gamma_s)\\
 &=\frac{d}{ds}_{|_{s=0}} \frac{d}{dt}_{|_{t=0}}(\theta_t\circ \gamma_s) \\
 &= \frac{d}{ds}_{|_{s=0}} X_b (\gamma_s)\\
 &= dX_b (v)
 \end{split}
\end{equation} seen as a section $X_b:M\ra TM$. Recall that $g$ induces a Riemannian metric on $TM$, denoted $g$ as well, so the $g$--orthogonal decomposition into the vertical and horizontal parts of $T_{(z,v)}(TM)=T_v(T_zM) \oplus \mH^{\nabla}$ gives $dX_b (v)=\nabla_vX_b+\tilde{v}$. Hence, identifying $T_v(T_zM)=T_zM$, we obtain
\begin{equation*}
  \Delta^{g}\mu_b =  2\sum_{i=1}^{n}\omega(\nabla_{w_i} X_b, w_i) =2 \sum_{i=1}^{n}\omega(d\phi_b(w_i), w_i)= -2 \sum_{i=1}^{n-m}\langle d\phi_i,b\rangle.
\end{equation*} \end{proof}

A direct consequence of this Lemma, applied on K\"ahler orbifold $(W,\sigma_o,\check{J}_o,\check{g}_o)$, is that for $\xi=X_b$, the function
$$-2f_{o,\xi}(z) = -(\Delta^{g_o}\langle\check{\mu}_o, [b]\rangle)_z = 2\langle \sum_{j=1}^{n-m_Z} \underline{\kappa}^Z_j, [b]\rangle =2\langle \sum_{j=1}^{n-m_Z} \underline{\kappa}^Z_j, b\rangle$$ when restricted on $Z$, is a rational polynomial. Actually it is a linear function with rational coefficients. Recall from~\S\ref{secWeights} that $\underline{\kappa}^Z_1,\dots,\underline{\kappa}^Z_{n-m_Z}$ lie in the annihilator of $b_o\in \kt$ and thus are defined on $\kt/\bR b_o$.    

This completes the proof of Theorem \ref{theoSCAL=rat}.

 \begin{corollary} \label{coroSboundary2} When $b\in \kt^+$ tends to the boundary of $\kt^+$, $\bfS_{X_b}$ tends to $+\infty$.  
 \end{corollary}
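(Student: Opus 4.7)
The plan is to mirror the argument used for Corollary~\ref{coroVboundary}, but now applied to the rational expression \eqref{eqSCALsumZ0} for $\bfS_{X_b}$ instead of \eqref{formulaDHrationalVolL} for $\bfV_{X_b}$.

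First I recall from the proof of Corollary~\ref{coroVboundary} that when $b\in\kt^+$ approaches $\partial\kt^+$ (and stays away from $0$), the function $\eta_o(X_b)$ on $N$ attains its minimum on a unique connected component $\tilde Z$ of its critical set (by the Morse--Bott/even-index argument of~\cite{GS}). Let $Z=\pi(\tilde Z)\subset W$ be the corresponding component of the fixed-point locus of $T/S^1_o$. From \S\ref{secWeights} together with the local Morse--Bott normal form of $\eta_o(X_b)$ recalled in that same proof,
$$
\langle\underline{\kappa}_0^Z,b\rangle=\eta_o(X_b)_{\tilde Z}\longrightarrow 0,\qquad \langle\underline{\kappa}_i^Z,b\rangle>0\text{ for }i=1,\dots,n-m_Z,
$$
while for every other component $Z'$ of the fixed locus, $\langle\underline{\kappa}_0^{Z'},b\rangle$ stays bounded away from $0$.

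Next I read off the leading pole of $\bfS_{X_b}$ at $\langle\underline{\kappa}_0^Z,b\rangle=0$ from \eqref{eqSCALsumZ0}. Expanding each factor $(c_1(E_j^Z)-\langle\underline{\kappa}_j^Z,b\rangle)^{-1}$ as a geometric series in $c_1(E_j^Z)/\langle\underline{\kappa}_j^Z,b\rangle$ and keeping only the degree-$2m_Z$ part on $Z$, the pole in $1/\langle\underline{\kappa}_0^Z,b\rangle$ attached to $Z$ has order at most $m_Z+1$. The maximal order is realised only by pairing $c_1(E_0^Z)^{m_Z}$ (extracted from the $j=0$ factor) with the scalar part $-f_{o,\xi}(Z)=\langle\sum_{i=1}^{n-m_Z}\underline{\kappa}_i^Z,b\rangle$ of the numerator furnished by Lemma~\ref{lemFIXlap}; the piece proportional to $\iota_Z^*c_1(W)$ already occupies two degrees of cohomology on $Z$, so it contributes only at pole order $m_Z$ and is subdominant. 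Similarly, the other fixed components $Z'$ contribute only bounded terms. The leading asymptotic is therefore
$$
\bfS_{X_b}\ \sim\ \frac{2(2\pi)^{n+1}}{(n-1)!\,d_Z}\cdot\frac{\bigl\langle\sum_{i=1}^{n-m_Z}\underline{\kappa}_i^Z,b\bigr\rangle}{\langle\underline{\kappa}_0^Z,b\rangle^{m_Z+1}\prod_{j=1}^{n-m_Z}\langle\underline{\kappa}_j^Z,b\rangle}\int_Z c_1(E_0^Z)^{m_Z}.
$$

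Finally I check positivity. By construction of $\mL\to W$ as the K\"ahler reduction polarising $(W,\sigma)$, the class $c_1(E_0^Z)=\iota_Z^*c_1(\mL)$ is the restriction of a K\"ahler class; in particular $\int_Z c_1(E_0^Z)^{m_Z}>0$. The numerator $\langle\sum_{i\geq 1}\underline{\kappa}_i^Z,b\rangle$ is a strictly positive sum, and every denominator factor is positive. Hence the leading term is strictly positive and tends to $+\infty$ as $\langle\underline{\kappa}_0^Z,b\rangle\to 0$, forcing $\bfS_{X_b}\to+\infty$. The main obstacle is the bookkeeping of pole orders and the exclusion of cancellations between the $\iota_Z^*c_1(W)$ contribution and the $f_{o,\xi}(Z)$ contribution in~\eqref{eqSCALsumZ0}; both are resolved by the cohomological degree count above combined with the uniqueness of the minimum of $\eta_o(X_b)$ established in Corollary~\ref{coroVboundary}.
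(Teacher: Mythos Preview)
Your proof is correct and follows essentially the same route as the paper's: identify the unique minimising component $Z$ via the Morse--Bott argument from Corollary~\ref{coroVboundary}, extract the highest-order pole in $\langle\underline{\kappa}_0^Z,b\rangle$ from~\eqref{eqSCALsumZ0}, and check its positivity using Lemma~\ref{lemFIXlap} together with $c_1(E_0^Z)=\iota_Z^*[\sigma/2\pi]>0$. Your explicit degree count showing that the $\iota_Z^*\rho_o^T$ contribution has pole order only $m_Z$ (hence is subdominant) is a detail the paper leaves implicit, but otherwise the two arguments coincide.
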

 \begin{proof} This goes essentially as in proof of Corollary~\ref{coroVboundary}. When $b\in \kt^+$ tends to the boundary of $\kt^+$ (but away from $0\in \del \kt^+$), there is at least one point $p\in N$ such that $\eta_o(X_b)_p \ra 0$ by definition of $\kt^+$, see \eqref{eq:SasakiCone}. Therefore, $p$ is a minimum of the function $\eta_o(X_b) =\langle\mu_{\eta_o},b \rangle$ and arguing as in the proof of Corollary~\ref{coroVboundary} we get that there is a unique connected component $\tilde{Z}\subset \mbox{crit}(\eta_o(X_b)) \subset N$ such that $\tilde{Z}$ is a local minimum, thus it is a global minimum and there is no other part of $N$ on which $\eta_o(X_b)$ tends to $0$ faster than it does on $\tilde{Z}$. Moreover, 
$$\langle \underline{\kappa}_i^Z , b\rangle >0$$ 
for $i=1,\dots , n-m_Z$ and, see~\S\ref{secWeights}, we have $\langle \underline{\kappa}_0^Z , b\rangle=\eta_o(X_b)_p$.  Hence, the leading term of $\bfS_{X_b}$  in \eqref{eqSCALsumZ0} is the one where $\langle \underline{\kappa}_0^Z , b\rangle$ appears with the greater exponent. This term is
 \begin{equation}\label{leadSxi}
 \frac{-2f_{o,\xi}(Z)}{d_Z(n-1)!}  \left(\prod_{j=0}^{n-m_Z} \frac{1}{\langle \underline{\kappa}_j^Z, b\rangle} \right)  \int_Z \left(\frac{c_1(E^Z_0)}{\langle \underline{\kappa}_0^Z, b\rangle}\right)^{m_Z}.
\end{equation} It is positive by the observations above, Lemma~\ref{lemFIXlap} and since $c_1(E_0^Z) = \iota_Z^*c_1(\mL)= \iota_Z^*[\sigma/2\pi] >0$ (see \S\ref{secWeights}) and tends to $+\infty$ when $\langle \underline{\kappa}_0^Z , b\rangle=\eta_o(X_b)_p \ra 0$.\end{proof}

\section{The Einstein-Hilbert functional and Scalar Curvature}

Here by scalar curvature we mean the transverse scalar curvature $s^T_\xi$ of a Sasakian structure $\cals=(\Ds,J, \xi,\eta,g)$.  Of course, $s^T_\xi$ is related to the scalar curvature $s_g$ of the Sasaki metric $g$ by $s_g=s^T_\xi-2n$ \cite{BG:book}.
Recall the Einstein-Hilbert functional \eqref{HEdef} which can be written in terms of the average scalar curvature $\bar{s}^T_\xi$ as
$$\bfH(\xi)=\frac{\bfS_\xi^{n+1}}{\bfV_\xi^n}=(\bar{s}^T_\xi)^{n+1}\bfV_\xi.$$
Since it is invariant under the transverse homothety operation $\xi\mapsto a^{-1}\xi$ with $a\in\bbr^+$, it descents to a function on the space of rays  of the Sasaki cone $\gt^+$. We have

\begin{lemma}\label{bfHinfty}
When $b\in \kt^+$ tends to the boundary of $\kt^+$, $\bfH(X_b)$ tends to $+\infty$.  
\end{lemma}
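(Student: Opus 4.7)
The plan is to leverage the leading-term analyses already carried out in Corollaries \ref{coroVboundary} and \ref{coroSboundary2} and compare the pole orders of $\bfS$ and $\bfV$ against each other.

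First, since $\bfV$ is homogeneous of order $-(n+1)$ (Theorem \ref{theoVol=rat}) and $\bfS$ is homogeneous of order $-n$ (Theorem \ref{theoSCAL=rat}), the functional $\bfH=\bfS^{n+1}/\bfV^{n}$ is homogeneous of degree $0$ and descends to a function on rays of $\kt^+$. I would therefore reduce the problem to: for any sequence $b_k\in\kt^+$ with $|b_k|$ bounded above and below away from $0$, converging to some $b_\infty\in\partial\kt^+\setminus\{0\}$, show that $\bfH(X_{b_k})\to +\infty$.

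For such a sequence, the argument already used for Corollary \ref{coroVboundary} produces a unique connected component $\tilde{Z}\subset N$ realising the global minimum of $\eta_o(X_{b_\infty})$ (uniqueness comes from the fact that $\eta_o(X_{b_\infty})$ is a Morse--Bott function on $N$ with only even-index critical components). Setting $Z=\pi(\tilde Z)\subset W$ and $\varepsilon_k:=\langle\underline{\kappa}_0^Z,b_k\rangle=\eta_o(X_{b_k})|_{\tilde Z}$, one has $\varepsilon_k\to 0^+$ while all other weights $\langle\underline{\kappa}_j^Z,b_k\rangle$ ($j=1,\dots,n-m_Z$) remain bounded and strictly positive. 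Moreover, for any other fixed component $Z'\neq Z$ of the $T$-action on $\mL$, the weight $\langle\underline{\kappa}_0^{Z'},b_k\rangle$ stays bounded away from $0$, so its contribution to both \eqref{formulaDHrationalVolL} and \eqref{eqSCALsumZ0} stays bounded.

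Next, I would read off the leading behavior from \eqref{leadVol} and \eqref{leadSxi}: both expressions equal $C_V\,\varepsilon_k^{-(m_Z+1)}+O(\varepsilon_k^{-m_Z})$ and $C_S\,\varepsilon_k^{-(m_Z+1)}+O(\varepsilon_k^{-m_Z})$ respectively, with $C_V>0$ by \eqref{leadVol} and $C_S>0$ because Lemma \ref{lemFIXlap} identifies $-2f_{o,\xi}(Z)=2\sum_{j=1}^{n-m_Z}\langle\underline{\kappa}_j^Z,b_k\rangle>0$ in the limit. Taking the ratio, the pole orders partially cancel:
\[
\bfH(X_{b_k})=\frac{\bfS_{X_{b_k}}^{n+1}}{\bfV_{X_{b_k}}^{n}}\;\sim\;\frac{C_S^{\,n+1}}{C_V^{\,n}}\,\varepsilon_k^{-(m_Z+1)}\longrightarrow +\infty.
\]

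The step I expect to be the main technical obstacle is verifying that \emph{the same} component $Z$ dominates the asymptotics of $\bfS$ and $\bfV$ simultaneously, so that the exponents subtract cleanly to leave $\varepsilon_k^{-(m_Z+1)}$ rather than something that might conspire to be $O(1)$. This is precisely what uniqueness of the minimum component $\tilde Z$, together with the structure of \eqref{formulaDHrationalVolL} and \eqref{eqSCALsumZ0} as finite sums over fixed components with only the $Z$-summand singular in $\varepsilon_k$, is designed to supply. A secondary bookkeeping point, routine but worth checking, is that in expanding $\bfS_{X_{b_k}}^{n+1}$ the cross terms with subleading contributions from other components $Z'$ are of strictly smaller order than the main term $C_S^{n+1}\varepsilon_k^{-(n+1)(m_Z+1)}$, which is clear since those contributions are bounded while $\varepsilon_k^{-1}\to +\infty$.
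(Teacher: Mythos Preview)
Your proposal is correct and follows essentially the same route as the paper: extract the leading terms \eqref{leadVol} and \eqref{leadSxi} governed by the unique minimizing component $Z$, observe that both $\bfV$ and $\bfS$ blow up like $\langle\underline{\kappa}_0^Z,b\rangle^{-(m_Z+1)}$ with positive coefficients, and conclude that $\bfH=\bfS^{n+1}/\bfV^n$ behaves like a positive constant times $\langle\underline{\kappa}_0^Z,b\rangle^{-(m_Z+1)}$. Your write-up is in fact more explicit than the paper's about why the same $Z$ dominates both numerator and denominator and why the other fixed components contribute only bounded terms, which are exactly the points the paper's short proof leaves to the reader.
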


\begin{proof} Both $\bfS$ and $\bfV$ tend to $+\infty$ when $b$ tends to the boundary of $\kt^+$. Moreover, considering the leading terms \eqref{leadVol} and \eqref{leadSxi} into Equation \eqref{HEdef} and putting $\langle \underline{\kappa}_0^Z , b\rangle=\eta_o(X_b)_p$ as a factor we see that $\bfH(X_b)$ behaves as 
$$ \frac{C(b)}{\langle \underline{\kappa}_0^Z, b\rangle^{m_Z+1}}$$ where $C(b)$ is a rational function tending to $$f_{o,\xi}(Z)^{n+1}\left(\prod_{j=1}^{n-m_Z} \frac{1}{\langle \underline{\kappa}_j^Z, b\rangle} \right) \left( \int_Z c_1(E^Z_0)^{m_Z}\right)>0$$
%
when $\langle \underline{\kappa}_0^Z , b\rangle=\eta_o(X_b)_p \ra 0$. 
\end{proof}

Equation \eqref{eqSCALT} has several interesting consequences. Recall the definition of the {\it type} of a Sasakian structure. A Sasakian structure $(N,\xi,\eta,\Phi,g)$ is said to be of {\it positive (negative)} if $c_1(\calf_\xi)$ can be represented by a positive (negative) definite basic $(1,1)$ form. If $c_1(\calf_\xi)$ vanishes it of {\it null type}. When none of these hold it is of {\it indefinite type}. If $\dim~\gt^+\geq 2$, then the Sasakian structure is either of positive or indefinite type.

\begin{lemma}\label{bfSlem} 
Fix a Sasaki structure $\cals_0=(\Ds, J, \xi_0)$ such that its Sasaki automorophism group has dimension at least two.
\begin{enumerate}
\item There is no more than one ray of Reeb vector fields $\xi\in\kt^+$ having vanishing transverse scalar curvature $s^T_0$ and if there is one such ray $\gr_0$ then for all $\xi\in\gt^+\setminus \{\gr_0\}$ we have $\bfS_\xi>0$. 
\item If $\cals_0$ satisfies $s_0^T>0$ almost everywhere, then $\bfS_\xi>0$ for all $\xi\in\gt^+$. In particular, if $\cals_0$ is of positive type, then $\bfS_\xi>0$ for all $\xi\in\gt^+$.
\end{enumerate}
\end{lemma}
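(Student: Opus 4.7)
The strategy is to leverage identity \eqref{eqSCALT},
\begin{equation*}
\bfS_\xi = \int_N \frac{s_o^T}{\eta_o(\xi)^n}\vol_o + \frac{n+1}{2}\int_N \frac{|d\eta_o(\xi)|^2_{g_o}}{\eta_o(\xi)^{n+2}}\vol_o,
\end{equation*}
applied to a well-chosen base Sasakian structure $(\xi_o,\eta_o,g_o)\in\Sas(\Ds,J)$. The first point to record is that, since $[\xi,\xi_o]=0$, the function $\eta_o(\xi)$ is $\xi_o$-basic and, via Remark~\ref{rem:affine=t*}, corresponds to an affine-linear function on the moment polytope of the symplectic quotient orbifold $(W,\sigma)$. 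The hypothesis $\dim T\geq 2$ makes that polytope positive-dimensional, so $\eta_o(\xi)$ is constant on $N$ precisely when $\xi$ is proportional to $\xi_o$; hence the second summand in \eqref{eqSCALT} is strictly positive for every $\xi\in\gt^+$ outside the ray $\gr_o=\bR^+\xi_o$.

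Part~(2) follows directly: with $s_o^T>0$ almost everywhere the first integral is strictly positive while the second is non-negative, giving $\bfS_\xi>0$ throughout $\gt^+$. For the positive-type case, the basic first Chern class $c_1^B(\calf_{\xi_0})$ admits a positive-definite representative, and by the transverse Aubin--Yau theorem (El Kacimi-Alaoui for basic Monge--Amp\`ere equations) one can find a Sasakian structure in $\Sas(\xi_0,\bar J)$ whose transverse Ricci form is that positive representative, hence with $s_o^T>0$ pointwise. Since $\bfS_\xi$ depends only on the class $\Sas(\xi_0,\bar J)$, we may use this structure as the base and reduce to the preceding paragraph.

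For part~(1) I would combine three ingredients. Corollary~\ref{coroPOS} provides uniqueness of the vanishing ray $\gr_0$, so $\bfS$ is nowhere zero on $\gt^+\setminus\gr_0$. Corollary~\ref{coroSboundary} ensures $\bfS_\xi\to+\infty$ as $\xi$ approaches $\partial\gt^+$, so $\bfS>0$ in a neighbourhood of the boundary of the cone. Finally, each connected component of $\gt^+\setminus\gr_0$ is a convex open subcone meeting $\partial\gt^+$ (the whole complement if $\dim\gt^+\geq 3$, each of two open half-sectors if $\dim\gt^+=2$), so continuity of $\bfS$ together with the intermediate value theorem forces $\bfS>0$ throughout $\gt^+\setminus\gr_0$.

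The main obstacle is Corollary~\ref{coroPOS}'s uniqueness assertion on which part~(1) relies. Placing the base on a putative vanishing ray $\gr_1$ only yields $\int_N s_o^T\vol_o=0$, not pointwise positivity of $s_o^T$, so the two summands of \eqref{eqSCALT} could a priori cancel along a second vanishing ray $\gr_2$. Excluding this requires exploiting the strict positivity of the second summand together with a symmetry argument swapping the roles of $\xi_1$ and $\xi_2$, or a convexity analysis of $t\mapsto\bfS_{(1-t)\xi_1+t\xi_2}$ built directly from \eqref{eqSCALT}; this is the one place where the proposal does genuine work beyond quoting the ambient results.
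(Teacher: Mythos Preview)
Your treatment of part~(2) is correct and is exactly the paper's argument: apply \eqref{eqSCALT} with a base structure having $s_o^T>0$ (almost everywhere), so the first integral is strictly positive and the second non-negative. The positive-type clause via a transverse Calabi--Yau deformation is also the right idea.

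For part~(1), however, you have misread the hypothesis. ``Vanishing transverse scalar curvature $s^T_0$'' means the \emph{function} $s^T$ is identically zero on $N$, not merely that the total $\bfS$ vanishes. This is the same usage as in Corollary~\ref{coroPOS} (``ray of vanishing transverse scalar curvature'' versus ``total transverse scalar curvature''). With the correct reading the proof is immediate from \eqref{eqSCALT} and from the observation you already made in your first paragraph: take $\xi_o$ on the putative vanishing ray $\gr_0$, so $s_o^T\equiv 0$ and the first integral in \eqref{eqSCALT} vanishes \emph{pointwise}; hence
\[
\bfS_\xi=\frac{n+1}{2}\int_N\frac{|d\eta_o(\xi)|_{g_o}^2}{\eta_o(\xi)^{n+2}}\,\vol_o\ \ge 0,
\]
with equality iff $\xi\in\bR^+\xi_o=\gr_0$. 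This simultaneously gives $\bfS_\xi>0$ on $\gt^+\setminus\gr_0$ and uniqueness (a second ray $\gr_1$ with $s^T_1\equiv 0$ would force $\bfS_{\xi_1}=0$ for $\xi_1\in\gr_1$, contradiction).

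Your final paragraph (``placing the base on a putative vanishing ray $\gr_1$ only yields $\int_N s_o^T\,\vol_o=0$, not pointwise positivity'') is exactly the symptom of the misreading: the hypothesis \emph{is} pointwise, so no cancellation between the two summands of \eqref{eqSCALT} can occur. Consequently the boundary behaviour from Corollary~\ref{coroSboundary}, the connectivity/IVT step, and the appeal to Corollary~\ref{coroPOS} are all unnecessary; the last is in any case circular, since Corollary~\ref{coroPOS} is precisely the content of part~(1) restated.
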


Note that if (1) of Lemma \ref{bfSlem} holds, then the ray $\gr_0$ is the unique global minimum of the Einstein-Hilbert functional $\bfH$ and $\bfH(\xi)=0$ for all $\xi \in \gr_0$. Moreover, no $\xi\in\gt^+$ has strictly positive scalar curvature. Somewhat more generally we have

\begin{proposition}\label{s0indef}
If the dimension of $\gt^+$ is at least two and there exists $\xi\in \gt^+$ such that $\bfS_{\xi}\leq 0$, then all elements of the Sasaki cone are of indefinite type.
\end{proposition}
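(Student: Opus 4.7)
The strategy is a direct contradiction argument that packages Lemma~\ref{bfSlem}(2) together with the type dichotomy recalled just above the proposition. The plan is to show that if any $\xi_0 \in \gt^+$ were of positive type, then Lemma~\ref{bfSlem}(2) would force $\bfS_\xi > 0$ for every $\xi \in \gt^+$, contradicting the hypothesis that $\bfS_\xi \leq 0$ somewhere.

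First, I would fix for contradiction some $\xi_0 \in \gt^+$ whose associated Sasakian structure $\cals_0 = (\Ds, J, \xi_0)$ is of positive type. The hypothesis $\dim \gt^+ \geq 2$ means the maximal torus $T \subset \mathrm{CR}(\Ds, J)$ has dimension at least two and fixes $\xi_0$ (since $\xi_0 \in \gt$), so $T$ embeds into the Sasaki automorphism group of $\cals_0$. Thus the dimension hypothesis of Lemma~\ref{bfSlem} is satisfied for $\cals_0$. Invoking Lemma~\ref{bfSlem}(2) directly yields $\bfS_\xi > 0$ for all $\xi \in \gt^+$, which contradicts the assumption that some $\xi \in \gt^+$ has $\bfS_\xi \leq 0$. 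Hence no $\xi_0 \in \gt^+$ can be of positive type.

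To conclude, I would appeal to the classification of types recalled in the preamble: whenever $\dim \gt^+ \geq 2$, a Sasakian structure in the cone is necessarily of positive or indefinite type (the null and negative cases are excluded). Since the previous step rules out the positive case for every $\xi_0 \in \gt^+$, each element of the Sasaki cone must be of indefinite type, as claimed.

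I expect no substantive obstacle: the heavy lifting was done earlier, first in the variation formula~\eqref{eqSCALT} (which underlies the proof of Lemma~\ref{bfSlem}(2)) and then in the cohomological classification of types. The only bookkeeping point is verifying that the hypothesis of Lemma~\ref{bfSlem} translates cleanly from $\dim \gt^+ \geq 2$, and this is immediate from the inclusion $T \hookrightarrow \mathrm{Aut}(\cals_0)$ observed above.
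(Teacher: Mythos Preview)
Your proof is correct and follows essentially the same route as the paper: assume some element is of positive type, obtain $\bfS_\xi>0$ for all $\xi$ (you via Lemma~\ref{bfSlem}(2), the paper via \eqref{eqSCALT} directly), and then invoke the positive/indefinite dichotomy for $\dim\gt^+\geq 2$. Since Lemma~\ref{bfSlem}(2) is itself an immediate consequence of \eqref{eqSCALT}, the two arguments are the same up to packaging.
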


\begin{proof}
Since ${\rm Dim}~\gt^+>1$, a Sasakian structure is either positive or indefinite. Suppose to the contrary that there is a Sasakian structure in $\gt^+$ of positive type, then taking this to be $\xi_o$ in \eqref{eqSCALT} gives a contradiction.
\end{proof}

\subsection{A Global Minimum}
We now show that $\bfH$ always attains a global minimum.
\begin{theorem}\label{globalmin}
Let $(N,\cald)$ be a contact manifold of Sasaki type with a $T$ action of Reeb type. Then there exists $\xi_{min}\in\gt^+$ that minimizes $\bfH$. Moreover, if $\bfH(\xi_{min})\neq 0$ and the corresponding Sasaki metric is extremal, it must have constant scalar curvature.
\end{theorem}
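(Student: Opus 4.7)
The plan is to combine the properness of $\bfH$ established in Lemma~\ref{bfHinfty} with a standard compactness argument for existence, and then to use the already-cited link between critical points of $\bfH$ and the transverse Futaki invariant to upgrade ``minimizer'' to ``cscS''.

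First, since $\bfH(a^{-1}\xi)=\bfH(\xi)$ for every $a\in\bR^+$, $\bfH$ descends to a function on the space of rays of $\kt^+$. To work with a concrete compact model, I would pick a linear form $\ell\in\kt^*$ strictly positive on $\overline{\kt^+}\setminus\{0\}$ (such an $\ell$ exists because $\kt^+$ is an open convex polyhedral cone with apex at the origin) and consider the slice $\Sigma:=\{\xi\in\kt^+\,|\,\ell(\xi)=1\}$. Its closure $\overline{\Sigma}$ in $\kt$ is a compact convex polytope whose relative interior is $\Sigma$ and whose relative boundary lies in $\partial\kt^+\setminus\{0\}$; the map $\xi\mapsto\bR^+\xi$ identifies $\Sigma$ with the space of rays of $\kt^+$. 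By Theorems~\ref{theoVol=rat} and~\ref{theoSCAL=rat}, $\bfV$ and $\bfS$ are rational on $\kt^+$, so $\bfH$ is continuous on $\Sigma$; by Lemma~\ref{bfHinfty} it extends lower semicontinuously to $\overline{\Sigma}$ with value $+\infty$ on $\partial\Sigma$. A continuous function on a relatively compact set that blows up at the relative boundary attains its infimum in the interior, which produces the desired $\xi_{min}\in\Sigma\subset\kt^+$.

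For the second statement, $\xi_{min}$ is then an interior critical point of $\bfH$. The variational relation established in~\cite{nonUNIQcscS,BHLT_EH} expresses $d\bfH_{\xi}$, modulo the radial direction $\bR\xi$ along which $\bfH$ is automatically constant by homogeneity, as a nonzero scalar multiple of $\bfS_{\xi}^{\,n}\cdot \mathfrak{F}_\xi$, where $\mathfrak{F}_\xi$ denotes the transverse Futaki invariant. Since $\bfH(\xi_{min})\neq 0$ is equivalent to $\bfS_{\xi_{min}}\neq 0$, the criticality of $\xi_{min}$ in the transverse directions forces $\mathfrak{F}_{\xi_{min}}\equiv 0$ on $\kt$. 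If, in addition, the compatible Sasaki metric is extremal, then the transverse gradient of $s^T$ is a transverse holomorphic vector field lying in $\kt$, and the standard identity $\mathfrak{F}_{\xi_{min}}(\mathrm{grad}\,s^T)=\|s^T-\bar s^T\|_{L^2}^2$ then forces $s^T$ to be constant.

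The main obstacle I expect is the middle step: one must carefully import from~\cite{BHLT_EH} the precise form of the relation between $d\bfH$ and $\mathfrak{F}_\xi$, confirm that the component of $d\bfH$ transverse to the radial direction is precisely what encodes the Futaki invariant, and verify that the side condition $\bfS_{\xi_{min}}\neq 0$ supplied by $\bfH(\xi_{min})\neq 0$ is exactly what makes the proportionality constant nonzero, so that criticality of $\bfH$ actually yields $\mathfrak{F}_{\xi_{min}}=0$ rather than a degenerate identity.
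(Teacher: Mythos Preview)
Your proof is correct and follows essentially the same approach as the paper: a compactness argument on a transversal slice using Lemma~\ref{bfHinfty} for existence, then invoking \cite{BHLT_EH} to conclude that criticality of $\bfH$ with $\bfS_{\xi_{min}}\neq 0$ forces vanishing Futaki invariant, hence csc under the extremal hypothesis. You spell out more detail than the paper does (the explicit construction of the slice and the Futaki--Mabuchi identity for the extremal vector field), but the argument is the same.
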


\begin{proof}
Recall \cite{BHLT_EH} that a set $\Sigma$ is a transversal subset of $\kt^+$ if $\Sigma\subset\kt^+_k$ and that $\Sigma$ meets each ray passing through $\kt^+$ in a single point and is a codimension one relatively compact subset of $\kt^+$  whose closure does not contain $0$. Let $\bar{\gt}^+$ denote the closure of $\gt^+$. Since $\bfH(\xi)$ is scale invariant, $H|_\grS$ is independent of $\grS$. We choose $\grS$ to be any intersection of $\bar{\gt}^+$ with a transverse hyperplane. Then $P=\bar{\gt}^+\cap \grS$ is a simple compact convex polytope. Now by Lemma \ref{bfHinfty} $\bfH(\xi)$ tends to $+\infty$ as $\xi$ tends to the boundary $\partial P$. Thus, since $\bfH$ is a continuous function on the interior $P^o=P\setminus\del P$ there exists $\xi_{min}$ such that $\bfH(\xi)\geq \bfH(\xi_{min})$. This proves the first statement.

For the second statement we assume that $\bfS_{\xi_{min}}\neq 0$. Then it follows from \cite{BHLT_EH} that the Sasaki-Futaki invariant $\bfF_\xi$ vanishes at $\xi=\xi_{min}$. So if $\cals_{min}$ is extremal it has constant scalar curvature. 

\end{proof}


Note that the ray $\gr_{min}$ of $\xi_{min}$ is generally not necessarily unique. However, by (1) of Lemma \ref{bfSlem}, it is unique if there exists a Sasakian structure in $\gt^+$ with vanishing transverse scalar curvature in which case $\bfH(\xi)\geq 0$ for all $\xi\in\gt^+$ and  equality holds if and only if $\xi=a\xi_{min}$ where $a\in\bbr^+$. In this case we get a unique global minimum. Explicit examples are given by Proposition 5.17 of \cite{BoTo13}.


There are counterexamples to the second statement of Theorem \ref{globalmin} if we drop the hypothesis that $\cals_{min}$ is extremal. See \cite{BovC16} for many examples where the entire Sasaki cone is relatively K-unstable and admits no extremal Sasaki metrics.

Another functional of interest to handle the case when $\bfS_\xi<0$ and $n$ is odd, is defined by
\begin{equation}\label{H1}
\bfH_1(\xi)={\rm sign}(\bfS_\xi)|\bfH(\xi)|.
\end{equation}
It follows as in the proof of Theorem \ref{globalmin} that $H_1$ also has a global minimum $\xi_{1min}$ giving
$$\bfH_1(\xi)\geq \bfH_1(\xi_{1min})$$
for all $\xi\in\gt^+$.

\subsection{Positivity and Rational Curves}
Recall that a projective algebraic variety $X$ of complex dimension $n$ is said to be {\it uniruled} if there exists a dominant rational map $X'\times\bbc\bbp^1\dashrightarrow X$ where $X'$ is a variety of dimension $n-1$. It is well known that a uniruled variety has Kodaira dimension $-\infty$, and that there is a rational curve through each point. Its relation with Sasakian geometry was mentioned briefly in Theorem 7.5.33 of \cite{BG:book} which states that a compact quasi-regular Sasakian structure of positive type has a uniruled quotient variety. This follows directly by a result of Miyaoka and Mori \cite{MiMo86}. In the simply connected case there is a topological classification of positive Sasakian 5-manifolds due to Kollar \cite{Kol05b} which shows how the existence of irrational curves determines the torsion in $H_2(N^5,\bbz)$ (see also Section 10.2.1 of \cite{BG:book}). However, a recent result of Heier and Wong \cite{HeWo12} shows that the hypothesis that the Sasaki manifold be of positive type can 
be weakened considerably. 

\begin{theorem}[Heier-Wong]\label{HWthm}
Let $N$ be a regular compact Sasaki manifold with Reeb field $\xi$. 
\begin{enumerate}
\item If $\bfS_\xi>0$, then its quotient variety is uniruled. 
\item If $\bfS_\xi=0$, then either its quotient is uniruled or its canonical line bundle is torsion. In particular, if the dimension of $\gt^+>1$, the quotient is uniruled.
\end{enumerate}
\end{theorem}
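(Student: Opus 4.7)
The theorem is attributed to Heier--Wong, whose original result is stated in the K\"ahler category; the content here is essentially the translation of that statement to the regular Sasaki setting via the quotient construction, together with an input from the earlier results of this paper to remove the ``torsion canonical bundle'' alternative when $\dim \gt^+>1$. Accordingly, my plan is to reduce each assertion to the K\"ahler statement and then exploit Lemma~\ref{bfSlem} for the final clause.

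\textbf{Step 1: Reduction to the K\"ahler quotient.} Since $\xi$ is regular, the induced circle action $S^1_\xi\subset T$ is free, and the quotient $X=N/S^1_\xi$ is a smooth compact K\"ahler manifold with K\"ahler form $\omega_X$ satisfying $\pi^\ast \omega_X=\tfrac12 d\eta$. The transverse metric $g^T$ and transverse Ricci form $\rho^T$ descend to the K\"ahler metric $g_X$ and Ricci form $\rho_X$ on $X$. Because integration along the $S^1$-fibres is just multiplication by the common fibre length, one obtains
\begin{equation*}
\bfS_\xi \;=\; C\,\int_X s_{g_X}\,\frac{\omega_X^n}{n!}
\end{equation*}
for an explicit positive constant $C$ depending only on the fibre length. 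Hence $\bfS_\xi>0$ (resp.\ $=0$) is equivalent to positivity (resp.\ vanishing) of the total scalar curvature of the K\"ahler quotient $X$.

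\textbf{Step 2: Invoking the K\"ahler version of Heier--Wong.} In the projective/K\"ahler setting, Heier--Wong prove that positive total scalar curvature forces negative Kodaira dimension, which by Boucksom--Demailly--P\u{a}un--Peternell implies uniruledness; and that vanishing total scalar curvature forces either negative Kodaira dimension (hence uniruled) or $\kappa(X)=0$ with trivial (rational) first Chern class, i.e.\ torsion canonical bundle. Applied to $X$ via Step~1 this yields parts (1) and (2) directly, with the only remaining work being the final ``in particular'' assertion of (2).

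\textbf{Step 3: The $\dim \gt^+>1$ clause.} Suppose $\bfS_\xi=0$ with $\dim\gt^+\ge 2$, and assume for contradiction that $X$ has torsion canonical bundle rather than being uniruled. By part~(1) of Lemma~\ref{bfSlem}, the ray of $\xi$ is the unique ray in $\gt^+$ along which $\bfS$ vanishes, and $\bfS_{\xi'}>0$ for every other $\xi'\in\gt^+$. Since the rational function $\bfS$ is continuous and non-negative on $\gt^+$ (by Corollary~\ref{coroSboundary} it blows up on $\partial\gt^+$), we may choose a sequence of quasi-regular Reeb vectors $\xi_k\in\gt^+$ converging to $\xi$ with $\bfS_{\xi_k}>0$. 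The orbifold version of Heier--Wong (equivalently, the orbifold Miyaoka--Mori theorem applied to the K\"ahler orbifold quotient $W_k=N/S^1_{\xi_k}$) gives that each $W_k$ is uniruled. Now all of the $W_k$ and $X$ are different K\"ahler (orbifold) reductions of the common K\"ahler cone $Y_0$ constructed in~\S\ref{SECTorbiResol}; passing through $Y_0$ they are all birationally equivalent (different polarizations of the same affine variety give birational quasi-projective models in the sense of variation of GIT). Since uniruledness is a birational invariant (even in the orbifold category), $X$ must also be uniruled, contradicting the assumption that $K_X$ is torsion.

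\textbf{Main obstacle.} The non-trivial technical point is Step~3: one must make precise the birational relationship between the K\"ahler (orbifold) quotients associated to different Reeb vector fields in the Sasaki cone, via their common K\"ahler cone, and verify that uniruledness transfers correctly through this relationship (in the orbifold setting when the approximating $\xi_k$ are only quasi-regular). Alternatively, one may simply appeal to an orbifold version of the Heier--Wong statement directly at a nearby quasi-regular $\xi_k$ and then argue that because $X$ is birational to such a $W_k$, the torsion-$K_X$ alternative cannot coexist with uniruled $W_k$. Either route hinges on this same birational compatibility, which is the crux of the argument.
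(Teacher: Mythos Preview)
The paper does not supply its own proof of this theorem: it is quoted as a result of Heier--Wong \cite{HeWo12}, and the only commentary is the subsequent remark that the quasi-regular (orbifold) generalization ``appears quite likely'' but is not established. Your Steps~1 and~2 are the correct (and essentially trivial) translation dictionary between the regular Sasaki statement and the original K\"ahler theorem, so there is nothing to compare there.

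The substantive issue is your Step~3, which contains a genuine gap. First, you invoke ``the orbifold version of Heier--Wong'' for the nearby quasi-regular $\xi_k$; but the paper explicitly flags this extension as open, so you are assuming precisely what is not available. Second, and more seriously, your claim that the various quotients $W_k=N/S^1_{\xi_k}$ and $X=N/S^1_\xi$ are all birationally equivalent ``via the common K\"ahler cone $Y_0$'' is not justified and is false in general: changing the Reeb vector field changes the $\bbc^*$-action by which one forms the GIT/symplectic quotient, and different $\bbc^*$-quotients of the same affine cone need not be birational (already in toric examples one sees genuinely different quotients). So the transfer of uniruledness along this purported birational correspondence does not go through.

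There is a much shorter argument for the ``in particular'' clause that avoids all of this. If $K_X$ is torsion then $c_1(X)=0$ in $H^2(X,\bbq)$, i.e.\ the basic first Chern class $c_1^B(\calf_\xi)$ vanishes and the Sasakian structure is of \emph{null type}. But the paper records (just before Lemma~\ref{bfSlem}) that when $\dim\gt^+\geq 2$ the structure is necessarily of positive or indefinite type, never null. Concretely: $\dim\gt^+\geq 2$ produces a nontrivial Hamiltonian Killing field on the quotient $X$, which must have zeros; whereas on a compact K\"ahler manifold with torsion $K_X$ one can pass (Calabi--Yau) to a Ricci-flat metric, on which Killing fields are parallel and hence zero-free --- a contradiction. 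Thus the torsion alternative is excluded outright, and uniruledness is the only option left from the Heier--Wong dichotomy. No approximation by quasi-regular rays, no orbifold Heier--Wong, and no birational gymnastics are needed.
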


\begin{rem}
It appears quite likely that this result can be generalized to the quasi-regular case, although one would need to work with the orbifold canonical bundle. This seemingly entails generalizing the methods of Boucksom-Demailly-P\u{a}un-Peternell \cite{BDPP13} to the case when the variety $X$ is a normal projective variety with cyclic quotient singularities. Nevertheless, below we are able to generalize Theorem \ref{HWthm} to a certain class of orbifolds.
\end{rem}

The non-negativity of a particular $\bfS_\xi$ is, however, far from the last word. Indeed, Example 5.16 of \cite{BoTo13} gives a quasi-regular Sasakian structure on an $S^3$ bundle over a Riemann surface of genus 23 with constant transverse scalar curvature equal to $-16\pi$. This is not only uniruled, but a ruled manifold. 

Here we are content to consider a special case.
\begin{theorem}\label{orbiHW}
Let $N$ be a compact manifold of dimension $2n + 1$ with a CR-structure $(\Ds, J)$ of Sasaki type with Sasaki cone $\gt^+$ of dimension at least two. Suppose further that all quasi-regular $\xi\in\gt^+$ have $S^1$ quotients of the form $(W_\xi,\grD_\xi)$ where $W_\xi$ is a smooth projective variety and $\grD_\xi$ is a branch divisor depending on $\xi$. If $\bfS_\xi\geq 0$ then $W_\xi$ is uniruled. 
\end{theorem}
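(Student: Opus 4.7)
The plan is to reduce the orbifold statement to the Boucksom--Demailly--P\u{a}un--Peternell (BDPP) criterion for uniruledness on the smooth projective variety $W_\xi$, via an intersection-theoretic push-down of the hypothesis $\bfS_\xi\geq 0$. Since $\xi$ is implicitly quasi-regular (otherwise $W_\xi$ would not exist as an orbifold), the transverse K\"ahler structure descends to an orbifold K\"ahler structure on $(W_\xi,\grD_\xi)$ with underlying K\"ahler form $\omega_\xi$ on the smooth variety $W_\xi$ and orbifold Ricci form $\rho^{orb}_\xi$. Integration along the Reeb $S^1$-fibres combined with \eqref{TotTransSCAL} and \eqref{eq:DefnbfSrecallLOC} gives
\[
\bfS_\xi \;=\; C\int_{W_\xi}\rho^{orb}_\xi\wedge\omega_\xi^{n-1} \;=\; 2\pi C\,c^{orb}_1(W_\xi,\grD_\xi)\cdot[\omega_\xi]^{n-1}
\]
for a positive constant $C$ depending on the length of the generic $S^1$-fibre.

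Writing $\grD_\xi = \sum_i(1-1/m_i)D_i$ and using the standard ramification formula $c_1(W_\xi) = c^{orb}_1(W_\xi,\grD_\xi) + \sum_i(1-1/m_i)[D_i]$, I would then compute
\[
-K_{W_\xi}\cdot[\omega_\xi]^{n-1} \;=\; c^{orb}_1(W_\xi,\grD_\xi)\cdot[\omega_\xi]^{n-1} + \sum_i(1-1/m_i)\int_{D_i}\omega_\xi^{n-1}.
\]
The first term on the right is $\geq 0$ by hypothesis, and each $\int_{D_i}\omega_\xi^{n-1}>0$ since $\omega_\xi$ is K\"ahler on $W_\xi$ and $D_i$ is a nonzero effective divisor. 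If $\grD_\xi\neq 0$, the right-hand side is therefore strictly positive, yielding $K_{W_\xi}\cdot[\omega_\xi]^{n-1}<0$; since $[\omega_\xi]^{n-1}$ is a movable class (the complete intersection of $n-1$ K\"ahler divisors), BDPP implies that $K_{W_\xi}$ is not pseudo-effective and hence $W_\xi$ is uniruled. When $\grD_\xi=0$, the field $\xi$ is actually regular, and the conclusion follows from Theorem~\ref{HWthm}: part~(1) when $\bfS_\xi>0$, and part~(2) when $\bfS_\xi=0$, the torsion-canonical alternative being excluded by the assumption $\dim\gt^+\geq 2$.

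The principal obstacle I anticipate is the first step: verifying carefully that the transverse curvature integral pushes down to the orbifold intersection pairing $c^{orb}_1(W_\xi,\grD_\xi)\cdot[\omega_\xi]^{n-1}$, and matching the orbifold first Chern class with the explicit expression $c_1(W_\xi)-\sum_i(1-1/m_i)[D_i]$ on the smooth underlying variety. A secondary delicate point is to confirm that whenever $\xi$ is strictly quasi-regular (not regular) the branch divisor $\grD_\xi$ is genuinely nonzero, so that the strict inequality in the estimate above is real; this should follow from the existence of nontrivial finite isotropy along codimension-one loci in $N$ for any non-regular $S^1$-action on $N$. Once both points are settled, the remaining intersection theory (effectivity of the $D_i$, strict positivity of $\int_{D_i}\omega_\xi^{n-1}$, and the BDPP dichotomy) is standard.
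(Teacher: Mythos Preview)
Your proposal is correct and follows essentially the same route as the paper: both arguments push the relation $c_1(W_\xi)=c^{orb}_1(W_\xi,\grD_\xi)+\sum_i(1-\tfrac{1}{m_i})[D_i]$ against the $(n-1)$-st power of the polarizing class, use $\bfS_\xi\geq 0$ to control the orbifold term, and then extract strict positivity from the effective divisors $D_i$. The differences are cosmetic but worth noting. First, the paper phrases the conclusion as $\bfS_{W_\xi}>0$ and then invokes the Heier--Wong theorem, whereas you go directly to BDPP via $-K_{W_\xi}\cdot[\omega_\xi]^{n-1}>0$; since Heier--Wong is itself an application of BDPP, this is the same endgame. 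Second, for the positivity of the $D_i$-contributions the paper appeals to Demailly's estimate $c(L_i)\geq\varepsilon\omega$ for curvature currents of effective divisors, while your $\int_{D_i}\omega_\xi^{n-1}>0$ is the more elementary Poincar\'e--Lelong identity and suffices. One point you should tighten, which the paper handles explicitly: the transverse K\"ahler form $\omega_\xi$ is a priori an \emph{orbifold} K\"ahler form with cone angles along $\grD_\xi$, so to speak of $[\omega_\xi]^{n-1}$ as a movable class on the smooth variety $W_\xi$ you need that $[\omega_\xi]$ is a genuine K\"ahler (indeed ample) class on $W_\xi$; the paper obtains this by observing that the orbi-ample line bundle $\mathcal{L}$ with $c_1(\mathcal{L})=[\omega_\xi]$ has $\mathcal{L}^{\mathrm{lcm}(m_i)}$ ample on $W_\xi$. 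Your secondary worry about $\grD_\xi\neq 0$ in the strictly quasi-regular case is already settled by the hypothesis: since $W_\xi$ is assumed smooth, any nontrivial isotropy of the $S^1$-action must occur in codimension one, hence contributes to $\grD_\xi$.
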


\begin{proof}
Recall \cite{BG:book} that a branch divisor has the form
$$\grD=\sum_i\bigl(1-\frac{1}{m_i}\bigr)D_i$$
where $D_i$ is an irreducible hypersurface contained in the orbifold singular locus and $m_i$ is the ramification index of $D_i$ and the sum is finite. The orbifold first Chern class and the first Chern class of $X$ are related by
\begin{equation}\label{orbiChern}
c_1^{orb}(W,\grD)=c_1(W)-\sum_i\bigl(1-\frac{1}{m_i}\bigr)c_1(L_i)
\end{equation}
where $L_i$ is the line bundle associated to $D_i$.
Now $2\pi c_1(W)$ is represented by the Ricci form $\grr_W$, and $2\pi c_1^{orb}(W,\grD)$ is represented by the orbifold Ricci form $\grr^{orb}$ which is the pushforward of the transverse Ricci form $\grr^T$ which is well defined since it is basic.
Let $\gro$ be any K\"ahler form on $X$, then by the work of Demailly \cite{Dem92} $2\pi c_1(L_i)$ is represented by the curvature current $c(L_i)$ of a singular Hermitian metric on $L_i$. Now Equation \eqref{orbiChern} implies
\begin{equation}\label{Riceqn}
\int_W\grr\wedge \gro^{n-1}=\int_W\grr^{orb}\wedge \gro^{n-1}+\sum_i\bigl(1-\frac{1}{m_i}\bigr) \int_W c(L_i)\wedge \gro^{n-1}.
\end{equation}
Note that the scalar curvature of the orbifold K\"ahler metric $\gro_0$ is just the transverse scalar curvature of the corresponding Sasakian structure.  The cohomology class $[\gro_0]\in H^2(W,\bbq)$ defines a positive orbi-line bundle $\call$ on $(W,\grD)$, so it is orbi-ample. It follows from orbifold theory that $\call^l$ where $l={\rm lcm}(m_i)$ is an ample line bundle on $X$. Thus $l[\gro_0]\in H^2(W,\bbz)$ and since $\call^l$ is ample $l[\gro_0]$ can be represented by a K\"ahler form which we choose to be $\gro$. This implies 
$$\int_W\grr^{orb}\wedge \gro^{n-1}=l^{n-1}\int_W\grr^{orb}\wedge \gro^{n-1}_0=\frac{l^{n-1}}{2n}\int_Ws^T\gro_0^n=C\bfS_\xi$$
for some $C>0$. If we let $\bfS_W$ denote the total scalar curvature of $X$, this and Equation \eqref{Riceqn} implies
$$\bfS_W=C\bfS_\xi +\sum_i\bigl(1-\frac{1}{m_i}\bigr) \int_W c(L_i)\wedge \gro^{n-1}.$$
Since the $D_i$ are effective it follows from (b) of Proposition 4.2 of \cite{Dem92} that $c(L_i)$ satisfies $c(L_i)\geq \gre\gro$ for some $\gre>0$, so this equation implies
\begin{equation}\label{orbiineq}
\bfS_W\geq C\bfS_\xi +\gre n!\sum_i\bigl(1-\frac{1}{m_i}\bigr) \bfV_W.
\end{equation}
So $\bfS_\xi\geq 0$ implies $\bfS_W>0$ and the result follows by the Heier-Wong Theorem \ref{HWthm}. 
\end{proof}

Actually by looking at sequences of quasi-regular Reeb fields that approach the boundary of $\gt^+$, we can do better.
\begin{corollary}\label{Reebseqcor}
Assuming the hypothesis of Theorem \ref{orbiHW},  if there exists a sequence $\{\xi_k\}$ of quasi-regular Reeb fields tending to the boundary in $\gt^+$ and such that $W_\xi=W_{\xi_k}$ is independent of $k$, then $W_\xi$ is uniruled.
\end{corollary}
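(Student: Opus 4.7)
The plan is to reduce the corollary to Theorem~\ref{orbiHW} by exploiting the divergence of $\bfS$ at the boundary of $\gt^+$. Concretely, I would first invoke Corollary~\ref{coroSboundary} (equivalently Corollary~\ref{coroSboundary2}), which guarantees that $\bfS_{\xi_k}\to+\infty$ as $\xi_k$ approaches $\partial\gt^+$. Thus there exists an integer $k_0$ such that $\bfS_{\xi_k}>0$ for all $k\geq k_0$; in particular $\bfS_{\xi_{k_0}}\geq 0$, which is exactly the hypothesis needed to apply Theorem~\ref{orbiHW} to the Reeb field $\xi_{k_0}$.

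Next I would verify that the hypotheses of Theorem~\ref{orbiHW} are indeed inherited at each $\xi_k$: by assumption each $\xi_k$ is quasi-regular with a smooth projective $S^1$--quotient $W_{\xi_k}$ together with a branch divisor $\grD_{\xi_k}$, and the Sasaki cone $\gt^+$ has dimension at least two since it contains a sequence converging to a boundary point while staying away from $0$. Applying Theorem~\ref{orbiHW} at $\xi=\xi_{k_0}$ then yields that $W_{\xi_{k_0}}$ is uniruled. Finally, since the hypothesis of the corollary tells us that $W_{\xi_k}$ is independent of $k$ and equal to $W_\xi$, we conclude $W_\xi=W_{\xi_{k_0}}$ is uniruled.

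The only mild subtlety I would want to check carefully is that the branch divisor $\grD_{\xi_k}$ (which \emph{is} allowed to depend on $k$) plays no role in the conclusion: only the underlying variety $W_{\xi_k}$ enters the statement of uniruledness. The proof of Theorem~\ref{orbiHW} runs through the inequality~\eqref{orbiineq} which uses the branch divisor only to bound $\bfS_{W_{\xi_k}}$ from below by $C\bfS_{\xi_k}$ plus a non-negative correction, so positivity of $\bfS_{\xi_k}$ alone is enough to conclude $\bfS_{W_{\xi_k}}>0$ and then invoke the Heier--Wong Theorem~\ref{HWthm}. There is no real obstacle beyond this bookkeeping: the corollary is essentially an immediate consequence of combining the boundary blow-up of $\bfS$ (Corollary~\ref{coroSboundary}) with the orbifold uniruledness result (Theorem~\ref{orbiHW}).
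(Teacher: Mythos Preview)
Your proposal is correct and follows essentially the same approach as the paper: invoke the boundary blow-up of $\bfS$ (Corollary~\ref{coroSboundary2}) to find some $\xi_{k_0}$ in the sequence with $\bfS_{\xi_{k_0}}>0$, then apply Theorem~\ref{orbiHW} at that Reeb field and use the constancy of $W_{\xi_k}$. Your write-up is in fact more careful than the paper's, which leaves the application of Theorem~\ref{orbiHW} and the independence-of-$k$ step implicit; your remarks on the branch divisor and on verifying the hypotheses are correct bookkeeping (note that $\dim\gt^+\geq 2$ is already part of the assumed hypotheses of Theorem~\ref{orbiHW}, so you need not justify it separately).
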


\begin{proof}
Since the dimension of $\gt^+$ is a least $2$ we know by Corollary \ref{coroSboundary2} that $\bfS_\xi\rightarrow +\infty$ near the boundary. So there exists a quasi-regular $\xi\in\{\xi_k\}\subset \gt^+$ such that $\bfS_\xi>0$.
\end{proof}

Our assumption that all quasi-regular $\xi\in\gt^+$ have a codimension one orbifold singular set is restrictive. It is, however, realized by a fairly large collection of Sasaki families, for example, those coming from the $S^3_\bfw$-join studied in \cite{BoToJGA}. For these one can also easily construct sequences satisfying the conditions of Corollary \ref{Reebseqcor}.

\bibliographystyle{abbrv}

\end{document}